   \newcommand{\lb}{label}
   \newcommand{\lm}{leftmargin}
   \newcommand{\is}{itemsep}
\definecolor{blue1}{RGB}{0,36,107}
\newtheorem {theorem} {Theorem}
\newtheorem {lemma}  [theorem]{Lemma}
\newtheorem {example} [theorem]{Example}
\newtheorem {remark} [theorem]{Remark}
\newcommand{\X}{{\mathcal{\bf} X}}
\newcommand{\e}{{\text e}}
\newcommand{\N}{{\mathbb N}}
\newcommand{\R}{{\mathbb R}}
\def\eps{\varepsilon}
\newcommand{\SigmaND}{\Sigma^{\scriptscriptstyle \text{ND}}}
\newcommand{\rhoND}{\rho^{\scriptscriptstyle \text{ND}}}
\newcommand{\SigmaD}{\Sigma^{\scriptscriptstyle \text{D}}}
\newcommand{\rhoD}{\rho^{\scriptscriptstyle \text{D}}}
\newcommand{\SigmaNDp}{\Sigma^{\scriptscriptstyle \text{ND+}}}
\newcommand{\SigmaDp}{\Sigma^{\scriptscriptstyle \text{D+}}}
\DeclareMathOperator\sign{sgn}
\DeclareMathOperator{\imagem}{Im}
\DeclareMathOperator{\rank}{rank}
\DeclareMathOperator{\sg}{span}
\title{Nonuniform $\mu$-dichotomy spectrum and kinematic similarity}
\author{César M. Silva}
\subjclass[2020]{34A34, 34C41, 37D99, 37G05}
\keywords{Nonuniform dichotomy spectrum; Nonautonomous differential equations; Nonuniform $\mu$-dichotomy; reducibility; normal form}
\email{csilva@ubi.pt}
\thanks{C\'esar M. Silva was partially supported by FCT through CMA-UBI (project UIDB/00212/2020). The author wishes to thank the anonymous referee for the very helpful questions raised and comments made.}
\begin{document}

\maketitle

\begin{abstract}
For linear nonautonomous differential equations we introduce a new family of spectrums defined with general nonuniform dichotomies: for a given growth rate $\mu$ in a large family of growth rates, we consider a notion of spectrum, named nonuniform $\mu$-dichotomy spectrum. This family of spectrums contain the nonuniform dichotomy spectrum as the very particular case of exponential growth rates. For each growth rate $\mu$, we describe all possible forms of the nonuniform $\mu$-dichotomy spectrum, relate its connected components with adapted notions of Lyapunov exponents, and use it to obtain a reducibility result for nonautonomous linear differential equations. We also give an illustrative examples where the spectrum is obtained, including a situation where a normal form is obtained for polynomial behavior.
\end{abstract}

\section{Introduction}\label{section:introduction}

The dichotomy spectrum (also called dynamical spectrum and Sacker-Sell spectrum), defined with exponential dichotomies, was introduced by Sacker and Sell and used to study linear skew product flows with compact base~\cite{Sacker-Sell-JDE-1978}. The usefulness of the dichotomy spectrum has been widely proved in several contexts~\cite{Chicone-Latushkin-MSM-1999,Johnson-Palmer-Sell-SIAMJMA-1987,Colonius-Kliemann-DC-2000}. In particular, the dichotomy spectral theory has proved to be an important tool in the obtention of normal forms for nonautonomous differential equations~\cite{Siegmund-JLMS-2002,Siegmund-JDE-2002}. A version of dichotomy spectrum for nonautonomous linear difference equations was considered and a spectral theorem in that context was proved in~\cite{Aulbach-Siegmund-JDEA-2001}. An infinite-dimensional version of the dichotomy spectrum was considered by Sacker and Sell~\cite{Sacker-Sell-JDE-1994} and by Chow and Leiva~\cite{Chow-Leiva-JJIAM-1994,Chow-Leiva-JDE-1996,Chow-Leiva-JDE-1995}. We also refer the interesting papers~\cite{Potzche-JDEA-2009,Potzche-IEOP-2012} by P\"{o}tzsche, where the relation of the dichotomy spectrum with a weighted shift operator on some sequence space is explored. The book~\cite{Chicone-Latushkin-MSM-1999} by Chicone and Latushkin is a central reference in the theory of differential equations in Banach spaces via spectral properties of the associated evolution semigroup.

Despite its importance in the theory, the notion of (uniform) exponential dichotomy is sometimes too restrictive and it is important to consider more general hyperbolic behavior. The concept of nonuniform exponential dichotomy generalizes the concept of (uniform) exponential dichotomy by allowing some exponential loss of hyperbolicity along the trajectories. A very complete theory of nonuniform exponential dichotomies is being developed by Barreira and Valls and a vast amount of results have already been published by those authors concerning this subject. Still in the context of nonuniform exponential dichotomies, we refer the books~\cite{Barreira-Valls-livro1}, by Barreira and Valls, concerning stability theory, and \cite{Barreira-Dragicevich-Valls-livro}, by Barreira, Dragičević and Valls, concerning the relation of admissibility with hyperbolic behavior.

To the best of our knowledge, a version of dichotomy spectrum defined with nonuniform exponential dichotomies, the so called nonuniform dichotomy spectrum, was considered for the first time by Zhang in~\cite{Zhang-JFA-2014} and by Chu, Liao, Siegmund, Xia and Zhang in~\cite{Chu-Liao-Siegmund-Xia-Zhang-BSM-2015}, with a slightly different definition. In both papers the authors describe the topological structure of the nonuniform dichotomy spectrum and use this spectrum to prove a result on the kinematic similarity of nonautonomous linear equations and block diagonal systems. Additionally, in~\cite{Zhang-JFA-2014}, the author obtained normal forms for nonautonomous nonlinear systems using the spectrum of the linear part. Corresponding results in the context of discrete dynamics were obtained in a recent paper by Chu, Liao, Siegmund, Xia and Zhu~\cite{Chu-Liao-Siegmund-Xia-Zhu-ANA-2022}.

Another way to generalize the notion of exponential dichotomy is to assume that the asymptotic behavior is not exponential. This reasoning leads to uniform dichotomies with asymptotic behavior given by general growth rates, a notion considered in the work of Naulin and Pinto~\cite{Naulin-Pinto-JDE-1995,Pinto-CMA-1994}. Proceeding in the direction of more general behavior, we can consider dichotomies that are both nonuniform and do not necessarily have exponential growth.

In~\cite{Bento-Silva-QJM-2012,Bento-Silva-NA-2012} a very general notion of nonuniform dichotomy, the notion of nonuniform $(\mu,\nu)$-dichotomy, was considered and stable manifold theorems were established for perturbations of nonautonomous linear equations admiting this type of hyperbolic behavior, both in the continuous and discrete settings. In the present work we consider nonuniform $\mu$-dichotomies, a general notion of dichotomy obtained by assuming in the definition of $(\mu,\nu)$-dichotomy that $\mu=\nu$. Note that the notion of $\mu$-dichotomy includes the usual nonuniform exponential dichotomies in~\cite{Barreira-Valls-JDE-2006,Barreira-Valls-CMP-2005} as well as the nonuniform polynomial dichotomies, introduced independently in~\cite{Bento-Silva-JFA-2009} and \cite{Barreira-Valls-NA-2009}, as very particular cases.

There is already a considerable amount of papers devoted to the study of difference and differential equations under the hypothesis of existence of a generalized notion of nonuniform dichotomy: in~\cite{Chang-Zhang-Qin-JMAA-2012}, Chang, Zhang and Qin discussed the robustness of nonuniform $(\mu,\nu)$-dichotomies and, in~\cite{Chu-BSM-2013}, Chu adressed the same problem for the discrete-time case (see also Crai~\cite{Crai-ASM-2017} for related results); in~\cite{Pan-JM-2014}, Pan discussed the existence of stable manifolds for delay equations with nonuniform $(\mu,\nu)$-dichotomies; in~\cite{Bento-Lupa-Megan-Silva-DCDS-B-2017}, Bento, Lupa, Megan and Silva obtained integral conditions for the existence of a nonuniform $\mu$-dichotomy (see also~\cite{Boruga-Megan-Toth-A-2021} where Boruga, Megan and Toth obtained integral characterizations of a generalized notion of uniform stability, in the spirit of Barbashin and Datko); in~\cite{Dragicevic-Pecek-Lupa-EJQTDE-2020} Dragičević, Peček and Lupa introduced the notion of a $\mu$-dichotomy with respect to a family of norms (a notion that generalizes the notion of nonuniform $\mu$-dichotomy) and characterized it in terms of two admissibility conditions (see also~\cite{Silva-CPAA-2021} for related results in discrete time context); in~\cite{Bento-Vilarinho-JDDE-2021} Bento and Vilarinho proved the existence of measurable invariant manifolds for small perturbations of linear random dynamical systems admitting a general type of dichotomy; in~\cite{Bento-Costa-EJQTDE-2017} Bento and Costa established conditions for the existence of global Lipschitz invariant center manifolds for perturbations of linear nonautonomous equations admitting a very general form of nonuniform trichotomy; in~\cite{Backes-Dragicevic-2021} Backes and Dragičević obtained a shadowing result for perturbations of linear equations admiting a nonuniform $(\mu,\nu)$-dichotomy.

There are two main objectives in this paper. The first objective is to define and characterize a new notion a nonuniform spectrum, defined with nonuniform $\mu$-dichotomies. To the extent of our knowledge, a notion of dichotomy spectrum defined with dichotomies with nonexponential behavior (even in the uniform case) is considered for the first time in the present work (see the definitions of nonuniform $\mu$-dichotomy spectrum and of $\mu$-dichotomy spectrum in section~\ref{section:Nonunif-dichotomy-spectrum}). We note that the equations involved in the definition of the new spectrum depend on the growth rates of the dichotomy and therefore the new definition is not evident from the definition of nonuniform dichotomy spectrum. The second objective is related to normal form theory: we use our notion of spectrum to obtain a reducibility theorem for linear nonautonomous differential equations and after apply our results to obtain the normal form of a triangular system with polynomial behavior. We also relate our notions of spectrum to the notion of Lyapunov exponent adapted to a growth rate $\mu$, a notion already considered in the literature~\cite{Barreira-Valls-NA-2009, Barreira-Valls-DCDS-2012}. We emphasise that our notion of spectrum allows us to obtain normal forms in situations where the usual Lyapunov exponents are zero. This is illustrated in the triangular example referred.

The theory on normal form can be traced back to Poincaré~\cite{Poincare-JMPA-1928}. The aim of this theory is to find a suitable change of variables that transforms a system of ordinary differential equations in another system that is simpler to analyse and has the same qualitative behavior. In the context of nonautonomous systems, reducibility results were obtained in~\cite{Siegmund-JLMS-2002} using the dichotomy spectrum and in~\cite{Zhang-JFA-2014,Chu-Liao-Siegmund-Xia-Zhang-BSM-2015,Chu-Liao-Siegmund-Xia-Zhu-ANA-2022} using the nonuniform dichotomy spectrum. We also refer the series of papers~\cite{Barreira-Valls-JDE-2006-2,Barreira-Valls-JFA-2007,Barreira-Valls-JDDE-2007,Barreira-Valls-ETDS-2008}, by Barreira and Valls, where several results on the topological conjugacy between systems with nonuniformly exponential behaviour were obtained. Finally, we mention the interesting work of Li, Llibre and Wu~\cite{Li-Llibre-Wu-ETDS-2009} concerning normal forms of almost periodic differential systems and the related work~\cite{Li-Llibre-Wu-JDEA-2009}, by the same authors, devoted to the study of normal forms of almost periodic difference systems.

We now briefly describe the structure of the paper. In section~\ref{section:Nonunif-dichotomy-spectrum} we present our setting and define the new family of spectrums. In section~\ref{section:spectral-theorems} we describe the topological structure of the new spectrums, relate them with adapted notions of Lyapunov exponents and present explicit examples of linear nonautonomous differential equations for which the new spectrums can be computed. In section~\ref{section:Normal formas} we obtain a reducibility theorem on the existence of normal forms for linear nonautonomous differential equations, using the new nonuniform $\mu$-dichotomy spectrum. Finally, in section~\ref{section:half_line}, we rewrite our results in the half-line setting and present an example of a family of nonautonomous triangular systems for which we can compute the nonuniform polynomial dichotomy spectrum and use it to obtain a normal form, highlighting the existence of linear integral manifolds where polynomial contraction and expansion occur.
\section{Nonuniform $\mu$-dichotomy spectrum}\label{section:Nonunif-dichotomy-spectrum}

Let $M_n(\R)$ be the set of square matrix functions of $n$th
order defined in $\R$ and let $A(t)\in M_n(\R)$ for each $t \in \R$. In this paper we consider nonautonomous linear systems
\begin{equation}\label{eq:sist-linear}
x'=A(t)x,
\end{equation}
$t \in \R$. We assume that $t \mapsto A(t)$ is continuous. As a consequence, all solutions of system \eqref{eq:sist-linear} are defined on the whole $\R$. We denote by $\Phi(t,s)$, $t,s \in \R$, the corresponding evolution operator.

We say that a function $\mu:\R\to \R^+$ is a {\it growth rate} if it is strictly increasing, $\mu(0)=1$, $\displaystyle \lim_{t \to +\infty} \mu(t)=+\infty$ and $\displaystyle \lim_{t \to -\infty} \mu(t)=0$. If, additionally, $\mu$ is differentiable, we say that it is a differentiable growth rate.

Denote the sign of $a$ by $\sign(a)$.
We say that system \eqref{eq:sist-linear} admits a {\it nonuniform $\mu$-dichotomy} (N$\mu$D)
if there is a family of projections $P(t)\in M_n(\R)$, $t\in\R$, such that, for all $t,s\in\R$,
$$P(t)\Phi(t,s)=\Phi(t,s)P(s),$$
and there are constants $K\ge 1$, $\alpha<0$, $\beta>0$ and $\theta,\nu\ge 0$, with $\alpha+\theta<0$ and
$\beta-\nu>0$, such that
\begin{equation}\label{eq:dichotomy1}
\|\Phi(t,s)P(s)\| \le  K \left(\frac{\mu(t)}{\mu(s)}\right)^{\alpha}\mu(s)^{\sign(s)\theta} \mbox{ for } t\ge s,
\end{equation}
\begin{equation}\label{eq:dichotomy2}
\|\Phi(t,s)Q(s)\| \le  K\left(\frac{\mu(t)}{\mu(s)}\right)^{\beta}\mu(s)^{\sign(s)\nu} \mbox{ for } t\le s,
\end{equation}
where $Q(s)=I-P(s)$ is the complementary projection. When $\theta=\nu=0$ we say that system \eqref{eq:sist-linear}
admits a {\it uniform $\mu$-dichotomy} (or more simply a {\it $\mu$-dichotomy} ($\mu$D)).

If $\mu(t)=\e^t$ we obtain the usual notion of {nonuniform exponential dichotomy} (NED).

Given a strictly increasing function $\nu:\R_0^+\to[1,+\infty[$ such that $\nu(0)=1$ and $\displaystyle \lim_{t \to +\infty} \nu(t)=+\infty$,  we can define a growth rate $\mu:\R \to \R$ by
\[
\mu(t)=\nu(|t|)^{\sign(t)}=
\begin{cases}
\nu(t) \ & \ t\ge 0\\
\frac{1}{\nu(|t|)} & \ t<0
\end{cases}
\]
and obtain the corresponding nonuniform $\mu$-dichotomy notion. Notice that when $\nu(t)=\e^t$ we obtain the already mentioned notion of nonuniform exponential dichotomy and when $\nu(t)=t+1$ we obtain the growth rate $p:\R \to \R$ given by
\begin{equation}\label{eq:poly-dich-R}
p(t)=
\begin{cases}
t+1 \ & \ t\ge 0\\
\frac{1}{1-t} & \ t<0
\end{cases}
\end{equation}
and the corresponding nonuniform $p$-dichotomy that we call a {\it nonuniform polynomial dichotomy (NPD)}. Note also that if $\nu$ is differentiable then $\mu$ is differentiable.
In particular, the polynomial growth rate $p$ is differentiable.

\begin{remark}\label{remark:form-of-projections}
  Consider the family of projections $P(t)\in M_n(\R)$ in~\eqref{eq:dichotomy1}--\eqref{eq:dichotomy2} and set
  \[
  \widetilde P=
  \left[
  \begin{array}{cc}
  I & 0\\
  0 & 0
  \end{array}
  \right],
  \]
  for all $t \in \R$, where $I$ denotes the identity of order $\dim \imagem P(t)$. A similar argument to that in~ Lemma 2.2 in~\cite{Chu-Liao-Siegmund-Xia-Zhang-BSM-2015}, shows that a fundamental matrix of~\eqref{eq:sist-linear}, $X(t)$, can be chosen appropriately so that
  $$\|\Phi(t,s)P(s)\|=\|X(t)\widetilde P X(s)^{-1}\| \quad \text{ and } \quad \|\Phi(t,s)Q(s)\|=\|X(t)\widetilde Q X(s)^{-1}\|.$$
  Thus, inequalities~\eqref{eq:dichotomy1}--\eqref{eq:dichotomy2} can be written as
\begin{equation}\label{eq:dichotomy1-fund-matrix}
\|X(t)\widetilde P X(s)^{-1}\| \le  K \left(\frac{\mu(t)}{\mu(s)}\right)^{\alpha}\mu(s)^{\sign(s)\theta} \ \mbox{ for } \ t\ge s,
\end{equation}
\begin{equation}\label{eq:dichotomy2-fund-matrix}
\|X(t)\widetilde Q X(s)^{-1}\| \le  K\left(\frac{\mu(t)}{\mu(s)}\right)^{\beta}\mu(s)^{\sign(s)\nu} \ \mbox{ for } \ t\le s,
\end{equation}
where $\widetilde Q=I-\widetilde P$ is the complementary projection of $\widetilde P$.

For the sake of completeness, we reproduce the argument in our context. Fix $\tau \in \R$. Then there is a non-singular matrix $T$ such that
$TP(\tau)T^{-1}=\widetilde P$ with
\[
\widetilde P  =
  \left[
  \begin{array}{cc}
  I & 0\\
  0 & 0
  \end{array}
  \right],
\]
where $I$ denotes the identity of order $\dim \imagem P(t)$. For each $t \in \R$, let $X(t)=\Phi(t,\tau)T^{-1}$. We have
\begin{equation}\label{eq:phi-t-s-P-s}
\begin{split}
\Phi(t,s)P(s)
&=\Phi(t,\tau)\Phi(\tau,s)P(s)=\Phi(t,\tau)P(\tau)\Phi(\tau,s)\\
&=\Phi(t,\tau)T^{-1}\widetilde P T\Phi(\tau,s)=\Phi(t,\tau)T^{-1}\widetilde P (\Phi(s,\tau)T^{-1})^{-1}\\
&=X(t)\widetilde P X(s)^{-1}
\end{split}
\end{equation}
and thus
\begin{equation}\label{eq:phi-t-s-Q-s}
\begin{split}
\Phi(t,s)Q(s)
& =\Phi(t,s)I-\Phi(t,s)P(s)=\Phi(t,\tau)\Phi(\tau,s)I-X(t)\widetilde P X(s)^{-1}\\
& =X(t)TT^{-1}X(s)^{-1}-X(t)\widetilde P X(s)^{-1}=X(t)(I-\widetilde P)X(s)^{-1}\\
&=X(t)\widetilde Q X(s)^{-1}.
\end{split}
\end{equation}
By~\eqref{eq:dichotomy1} and \eqref{eq:phi-t-s-P-s}, we get, for $t \ge s$,
\[
\|X(t)\widetilde P X(s)^{-1}\|=\|\Phi(t,s)P(s)\|=K \left(\frac{\mu(t)}{\mu(s)}\right)^{\alpha}\mu(s)^{\sign(s)\theta}
\]
and by~\eqref{eq:dichotomy2} and \eqref{eq:phi-t-s-Q-s}, we obtain, for $t \le s$,
\[
\|X(t)\widetilde Q X(s)^{-1}\|=\|\Phi(t,s)Q(s)\|=K\left(\frac{\mu(t)}{\mu(s)}\right)^{\beta}\mu(s)^{\sign(s)\nu},
\]
obtaining~\eqref{eq:dichotomy1-fund-matrix} and~\eqref{eq:dichotomy2-fund-matrix}.
\end{remark}

Let $\mu:\R\to\R^+$ be a differentiable growth rate. We define the {\it nonuniform $\mu$-dichotomy spectrum} of system \eqref{eq:sist-linear} by
\[
\SigmaND_\mu(A)=\left\{\gamma\in\R;\, x'=\left(A(t)-\gamma \frac{\mu'(t)}{\mu(t)}I\right)x \ \text{\small admits no N$\mu$D }\right\}
\]
and refer to the complement of this set, $\rhoND_\mu(A)=\R\setminus \SigmaND_\mu(A)$, as
{\it nonuniform $\mu$-resolvent set} of system \eqref{eq:sist-linear}.
We also define the {\it $\mu$-dichotomy spectrum} of system \eqref{eq:sist-linear} by
\[
\SigmaD_\mu(A)=\left\{\gamma\in\R;\, x'=\left(A(t)-\gamma \frac{\mu'(t)}{\mu(t)}I\right)x \ \text{ admits no $\mu$D }\right\}
\]
and call the complement of this set, denoted $\rhoD_\mu(A)=\R\setminus \SigmaD_\mu(A)$, by
{\it $\mu$-resolvent set} of system \eqref{eq:sist-linear}. Since a $\mu$-dichotomy is also a nonuniform $\mu$-dichotomy (with $\nu=\theta=0$), we have
$\Sigma_\mu^{ND}(A)\subseteq \Sigma_\mu^D(A)$. Following~\cite{Chu-Liao-Siegmund-Xia-Zhang-BSM-2015}, when $\mu=(\e^n)_{n \in \N}$ we denote the nonuniform $\mu$-dichotomy spectrum obtained by \emph{nonuniform dichotomy spectrum}. When $p$ is the growth rate in~\eqref{eq:poly-dich-R} we obtain the new notion of spectrum
\[
\SigmaND_{p}(A)
=\left\{\gamma\in\R;\, x'=\left(A(t)- \frac{\gamma}{1+|t|}I\right)x
\text{ \small admits no NPD}\right\}
\]
that we call \emph{nonuniform polynomial dichotomy spectrum}.

\begin{remark}
In the very particular case $\mu(t)=\e^t$, $\alpha=-\beta$ and $\theta=\nu$, we recover the notion of nonuniform dichotomy spectrum considered in~\cite{Chu-Liao-Siegmund-Xia-Zhang-BSM-2015}. On the other hand, to recover the notion of nonuniform dichotomy spectrum in~\cite{Zhang-JFA-2014} when $\mu(t)=\e^t$, one needs to add the extra condition $\max\{\theta,\nu\}\le\min\{-\alpha,\beta\}$ to notion of nonuniform exponential dichotomy. Thus, in the particular case of nonuniform exponential dichotomies, our spectrum is contained in the one considered in~\cite{Zhang-JFA-2014}.
\end{remark}

A nonempty set $\mathcal W$ of $\R\times\R^n$ such that $\{(t,\Phi(t,s)\xi):\,$ $t\in\R\}\subset \mathcal W$  for each
$(s,\xi)\in \mathcal W$ is called a {\it linear integral manifold} of system \eqref{eq:sist-linear}. For each $s\in\R$, the set
$$\mathcal W(s)=\{\xi\in\R^n;\,(s, \xi)\in \mathcal W\}$$
is a linear subspace of $\R^n$ called a {\it fiber} of the linear integral manifold $\mathcal W$. Note that all the fibers of a linear integral manifold have the same dimension and form a vector bundle over $\R$. We define the {\it rank} of $\mathcal W$, denoted by $\rank \mathcal W$, as the dimension of each of the fibers of $\mathcal W$. We also define the intersection and the sum of linear integral manifolds of \eqref{eq:sist-linear}, $\mathcal W_1$ and $\mathcal W_2$, respectively by
$$\mathcal W_1\cap \mathcal W_2 = \{(s,\xi)\in\R\times\R^n;\,\xi\in \mathcal W_1(s)\cap \mathcal W_2(s)\}$$
and
$$\mathcal W_1+\mathcal W_2 = \{(s,\xi)\in\R\times\R^n;\,\xi\in \mathcal W_1(s)+\mathcal W_2(s)\}.$$
If $\mathcal W_i\cap \mathcal W_j=\R\times \{0\}$ for $1\le i\ne j\le k$, the sum of the linear integral manifolds $\mathcal W_1,\ldots,\mathcal W_k$ is called the {\it Whitney sum} of those linear integral manifolds and is denoted by $\mathcal W_1\oplus\ldots\oplus \mathcal W_k$.
\section{Spectral theorems}\label{section:spectral-theorems}
Given $\gamma\in\R$, define the sets
\begin{equation}\label{eq:stable-inv-manif}
\mathcal
U_\gamma=\left\{(s,\xi)\in\R\times\R^n:\,\sup\limits_{t\ge
0}\|\Phi(t,s)\xi\|\mu(t)^{-\gamma}<\infty\right\}
\end{equation}
and
\begin{equation}\label{eq:unstable-inv-manif}
\mathcal V_\gamma=\left\{(s,\xi)\in\R\times\R^n:\,\sup\limits_{t\le
0}\|\Phi(t,s)\xi\|\mu(t)^{-\gamma}<\infty\right\}.
\end{equation}
These sets will be used in this section to discuss the topological structure of the nonuniform $\mu$-dichotomy spectrum of system \eqref{eq:sist-linear}.
Our first result includes Theorem 1.1 in~\cite{Zhang-JFA-2014} as the particular case of exponential growth rates.

\begin{theorem}\label{thm:spectrum} Let $\mu:\R\to\R^+$ be a differentiable growth rate. The following statements hold for system \eqref{eq:sist-linear}:
\begin{enumerate}[\lb=$\arabic*)$,\lm=5mm]
\item \label{thm:spectrum-1} There is an $m \in \{0,\ldots,n\}$ such that the nonuniform $\mu$-dichotomy spectrum $\SigmaND_\mu(A)$ of
system \eqref{eq:sist-linear} is the union of $m$ disjoint closed intervals
in $\R$:
\begin{enumerate}[\lb=$\alph*)$,\lm=6mm,\is=2mm]
  \item if $m=0$ then $\SigmaND_\mu(A)=\emptyset$;
  \item if $m=1$ then $$\SigmaND_\mu(A)=\mathbb R \ \text{or} \ \SigmaND_\mu(A)=(-\infty,b_1] \ \text{or} \ \SigmaND_\mu(A)=[a_1,b_1] \ \text{or} \ \SigmaND_\mu(A)=[a_1,\infty);$$
  \item \label{thm:spectrum-1c} if $1<m\le n$ then
  $$\SigmaND_\mu(A)=I_1\cup [a_2,b_2]\cup\ldots\cup [a_{m-1},b_{m-1}]\cup I_m$$
  with $I_1=[a_1,b_1]$ or $(-\infty,b_1]$, $I_m=[a_m,b_m]$ or $[a_m,\infty)$ and $a_i\le b_i<a_{i+1}$ for $i=1,\ldots,m-1$.
\end{enumerate}
\item \label{thm:spectrum-2} Assume $m \ge 1$, write
    $$\SigmaND_\mu(A)=I_1\cup [a_2,b_2]\cup\ldots\cup [a_{m-1},b_{m-1}]\cup I_m.$$
and, for $i=0,\dots,m+1$, define
\[
\mathcal W_i=
\begin{cases}
\mathbb R\times\{0\}  &\quad \text{ if } \ i=0 \ \text{ and } \ I_1=(-\infty,b_1]\\
\mathcal U_{\gamma_0} \ \text{ for some } \gamma_0\in(-\infty,a_1) &\quad \text{ if } \ i=0 \ \text{ and } \ I_1=[a_1,b_1]\\
\mathcal U_{\gamma_i} \cap \mathcal V_{\gamma_{i-1}}  \ \text{ for some } \gamma_i \in(b_i,a_{i+1}) &\quad \text{ if } \ i=1,\ldots,m\\
\mathcal V_{\gamma_m} \ \text{ for some } \gamma_m \in(b_m,+\infty) &\quad \text{ if } \ i=m+1 \ \text{ and } \ I_m=[a_m,b_m]\\
\mathbb R\times\{0\} \ &\quad \text{ if } \ i=m+1 \ \text{ and } \ I_m=[a_m,+\infty)\\
\end{cases}.
\]
Then, the sets $\mathcal U_{\gamma_i}$ $\mathcal V_{\gamma_i}$ and $\mathcal W_i$, $i=0,\ldots,m+1$, are integral manifolds, $\rank\mathcal W_i\ge 1$ for $i=1,\dots,m$ and
\[
\mathcal W_0\oplus\mathcal W_1\oplus\ldots\oplus\mathcal W_{m+1}=\R\times\R^n.
\]
\end{enumerate}
\end{theorem}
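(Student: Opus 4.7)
My plan is to adapt the classical Sacker--Sell scheme by replacing exponential estimates with $\mu$-estimates and reading off the topology of $\SigmaND_\mu(A)$ from the rank of the dichotomy projections. The key preliminary computation is that the $\gamma$-shifted system has evolution operator $\Psi_\gamma(t,s)=(\mu(s)/\mu(t))^{\gamma}\Phi(t,s)$; in particular, a N$\mu$D for the $\gamma$-shifted system with parameters $(\alpha,\beta,\theta,\nu)$ is equivalent to the estimates
\[
\|\Phi(t,s)P(s)\|\le K\left(\frac{\mu(t)}{\mu(s)}\right)^{\alpha+\gamma}\mu(s)^{\sign(s)\theta}\ (t\ge s),\quad \|\Phi(t,s)Q(s)\|\le K\left(\frac{\mu(t)}{\mu(s)}\right)^{\beta+\gamma}\mu(s)^{\sign(s)\nu}\ (t\le s).
\]
Hence if $\gamma_0\in\rhoND_\mu(A)$ with constants $(\alpha_0,\beta_0,\theta_0,\nu_0)$ and projection $P_{\gamma_0}$, the same projection witnesses a N$\mu$D for every $\gamma$ in the open interval $(\gamma_0+\alpha_0+\theta_0,\gamma_0+\beta_0-\nu_0)$. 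This simultaneously yields openness of $\rhoND_\mu(A)$ and shows that $\gamma\mapsto P_\gamma$ is locally constant.

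Next, for $\gamma\in\rhoND_\mu(A)$, I would prove the fiber identities $\mathcal U_\gamma(s)=\imagem P_\gamma(s)$ and $\mathcal V_\gamma(s)=\imagem Q_\gamma(s)$. The inclusion $\imagem P_\gamma(s)\subseteq\mathcal U_\gamma(s)$ is immediate from the stable estimate, since $\mu(t)^{\alpha+\gamma}\mu(t)^{-\gamma}=\mu(t)^{\alpha}\le 1$ for $t\ge 0$. For the reverse inclusion, writing $\xi=P_\gamma(s)\xi+Q_\gamma(s)\xi$ and using the invariance $Q_\gamma(t)\Phi(t,s)=\Phi(t,s)Q_\gamma(s)$ to invert the unstable estimate produces the lower bound
\[
\|\Phi(t,s)Q_\gamma(s)\xi\|\ge K^{-1}\left(\frac{\mu(t)}{\mu(s)}\right)^{\beta+\gamma}\mu(t)^{-\sign(t)\nu}\|Q_\gamma(s)\xi\|\quad\text{for } t\ge s,
\]
whose $\mu(t)^{-\gamma}$-weight grows like $\mu(t)^{\beta-\nu}\to\infty$; since $\beta-\nu>0$, membership in $\mathcal U_\gamma(s)$ forces $Q_\gamma(s)\xi=0$. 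The argument for $\mathcal V_\gamma$ is symmetric. From this identification, monotonicity of the rank function $r(\gamma)=\rank P_\gamma$ on $\rhoND_\mu(A)$ is almost immediate: for $\gamma_1<\gamma_2$, $\mu(t)^{\gamma_1-\gamma_2}\to 0$ as $t\to+\infty$, so $\mathcal U_{\gamma_1}\subseteq\mathcal U_{\gamma_2}$ and hence $r(\gamma_1)\le r(\gamma_2)$. Combined with local constancy, $r$ is constant on each connected component of $\rhoND_\mu(A)$ and distinct components realize distinct values in $\{0,1,\ldots,n\}$; therefore $\rhoND_\mu(A)$ has at most $n+1$ components and $\SigmaND_\mu(A)$ is the union of at most $n$ closed intervals, producing all the cases listed in item~1.

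For item~2, Step~1 shows that $P_{\gamma_i}$ does not depend on the choice of $\gamma_i$ inside its resolvent component, so $\mathcal W_i$ is well defined. Each $\mathcal U_\gamma$ and $\mathcal V_\gamma$ is a linear integral manifold because $\imagem P_\gamma$ and $\imagem Q_\gamma$ are $\Phi$-invariant, and intersections of integral manifolds are integral manifolds. For $1\le i\le m$, the strict inclusion $\imagem P_{\gamma_{i-1}}(s)\subsetneq\imagem P_{\gamma_i}(s)$, coming from the jump in $r$ across the gap $[a_i,b_i]$, yields
\[
\rank\mathcal W_i=\dim\bigl(\imagem P_{\gamma_i}(s)\cap \imagem Q_{\gamma_{i-1}}(s)\bigr)=r(\gamma_i)-r(\gamma_{i-1})\ge 1.
\]
The ranks telescope: $r(\gamma_0)+\sum_{i=1}^{m}[r(\gamma_i)-r(\gamma_{i-1})]+[n-r(\gamma_m)]=n$, with the boundary terms set to zero in the unbounded cases. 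For $i<j$, $\mathcal W_i(s)\subseteq\imagem P_{\gamma_i}(s)\subseteq\imagem P_{\gamma_{j-1}}(s)$ while $\mathcal W_j(s)\subseteq\imagem Q_{\gamma_{j-1}}(s)$, and these are complementary subspaces, so $\mathcal W_i\cap\mathcal W_j=\R\times\{0\}$, which together with the rank count delivers the Whitney-sum decomposition.

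The step I expect to be the main obstacle is the characterization $\mathcal U_\gamma=\imagem P_\gamma$ in paragraph~2, specifically the derivation of the unstable lower bound in the presence of the nonuniform factor $\mu(s)^{\sign(s)\theta}$, which requires splitting cases according to the signs of $s$ and $t$ and carefully using the invariance to pass from the upper estimate on $\|\Phi(s,t)Q_\gamma(t)\|$ (for $t\ge s$) to the desired lower bound on $\|\Phi(t,s)Q_\gamma(s)\xi\|$. Once this identification is in place, the remaining arguments are essentially the linear-algebraic bookkeeping standard in Sacker--Sell style proofs.
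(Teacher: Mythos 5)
Your outline follows the paper's route almost step for step: the shifted evolution operator $\Phi_\gamma$, openness of the resolvent with a locally constant projection, the identification $\mathcal U_\gamma=\imagem P_\gamma$ and $\mathcal V_\gamma=\ker P_\gamma$, and the modular-law bookkeeping for the Whitney sum are all exactly the paper's Lemmas and are carried out correctly. But there is one genuine gap at the heart of the argument: you assert that distinct connected components of $\rhoND_\mu(A)$ realize distinct values of $r(\gamma)=\rank P_\gamma$, and later invoke ``the jump in $r$ across the gap $[a_i,b_i]$'', without ever proving that such a jump occurs. Monotonicity plus local constancy only give $r(\gamma_1)\le r(\gamma_2)$ for resolvent points $\gamma_1<\gamma_2$; they do not exclude equality across a spectral interval. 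The missing step is the implication \ref{prop:equiv-2}$\Rightarrow$\ref{prop:equiv-3} of Lemma~\ref{prop:equiv}: if $\gamma_1<\gamma_2$ lie in the resolvent and $r(\gamma_1)=r(\gamma_2)$, then the inclusions $\mathcal U_{\gamma_1}\subseteq\mathcal U_{\gamma_2}$ and $\mathcal V_{\gamma_2}\subseteq\mathcal V_{\gamma_1}$ become equalities, the two dichotomies share the same projection, and for every $\gamma\in[\gamma_1,\gamma_2]$ one combines the stable estimate coming from $\gamma_1$ with the unstable estimate coming from $\gamma_2$ (using $(\mu(t)/\mu(s))^{\gamma_1-\gamma}\le1$ for $t\ge s$ and the dual inequality for $t\le s$) to produce a N$\mu$D, so that $[\gamma_1,\gamma_2]\subset\rhoND_\mu(A)$. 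Without this interpolation argument, both the bound on the number of spectral intervals and the claim $\rank\mathcal W_i\ge1$ in item~2 are unsupported.

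A second, related omission affects your count in item~1: ``at most $n+1$ resolvent components'' does not by itself yield ``at most $n$ spectral intervals'', since the complement in $\R$ of $k$ disjoint \emph{bounded} open intervals has $k+1$ components. You also need the endpoint propagation used in the paper's proof: if $r(\gamma)=0$ at a resolvent point, then $P=0$ and the unstable estimate alone shows that every $\gamma'<\gamma$ is also in the resolvent, so the component where $r=0$ is a left half-line; dually, $r=n$ forces a right half-line. Only with these two facts does one obtain a chain of the form $1\le r(\gamma_1)<\cdots<r(\gamma_{m-1})\le n-1$ between consecutive spectral intervals, closing the count to $m\le n$ and yielding the case list in item~1. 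The rest of your item~2 (well-definedness of $\mathcal W_i$, the telescoping rank identity via the modular law, and the pairwise trivial intersections) is correct and coincides with the paper's argument once the jump in rank is established.
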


We call the intervals in $\SigmaND$ spectral intervals and each linear integral manifold $\mathcal W_i$, $i=0,\dots,m+1$, a spectral manifold.

\begin{proof}

To prove our result we begin by establishing some lemmas.

\begin{lemma}\label{lemma:linear-manif}
The sets $\mathcal U_\gamma$ in~\eqref{eq:stable-inv-manif} and $\mathcal V_\gamma$ in~\eqref{eq:unstable-inv-manif} are
linear integral manifolds of system \eqref{eq:sist-linear}. Additionally, if $\gamma\le \widetilde\gamma$ then $\mathcal
U_{\gamma}\subseteq \mathcal U_{\widetilde\gamma}$ and $\mathcal V_{\widetilde\gamma}\subseteq \mathcal V_{\gamma}$.
\end{lemma}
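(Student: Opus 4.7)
The statement packages three very simple verifications: (i) linearity of each fiber, (ii) flow invariance, and (iii) the monotonicity inclusions. My plan is to treat $\mathcal U_\gamma$ first and then remark that $\mathcal V_\gamma$ is fully symmetric under replacing $t\ge 0$ by $t\le 0$.

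For linearity of the fibers I would fix $s\in\R$ and take $\xi_1,\xi_2\in\mathcal U_\gamma(s)$ together with $\lambda\in\R$. Linearity of the evolution operator $\Phi(t,s)$, the triangle inequality, and the elementary estimate
\[
\|\Phi(t,s)(\xi_1+\lambda\xi_2)\|\mu(t)^{-\gamma}\le \|\Phi(t,s)\xi_1\|\mu(t)^{-\gamma}+|\lambda|\,\|\Phi(t,s)\xi_2\|\mu(t)^{-\gamma}
\]
make the sup over $t\ge 0$ finite, hence $\xi_1+\lambda\xi_2\in\mathcal U_\gamma(s)$. The same bound handles $\mathcal V_\gamma(s)$.

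For the integral manifold property, suppose $(s,\xi)\in\mathcal U_\gamma$ and fix any $r\in\R$; I need $(r,\Phi(r,s)\xi)\in\mathcal U_\gamma$. The cocycle identity $\Phi(t,r)\Phi(r,s)=\Phi(t,s)$ makes this immediate: the supremum defining membership of $(r,\Phi(r,s)\xi)$ in $\mathcal U_\gamma$ coincides with the supremum for $(s,\xi)$, which is finite by assumption. The argument for $\mathcal V_\gamma$ is identical with the range $t\le 0$.

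For the monotonicity, I would use the sign of the exponent together with the key properties of a growth rate from section~\ref{section:Nonunif-dichotomy-spectrum}: $\mu$ is strictly increasing with $\mu(0)=1$, so $\mu(t)\ge 1$ for $t\ge 0$ and $\mu(t)\le 1$ for $t\le 0$. If $\gamma\le\widetilde\gamma$, then for $t\ge 0$,
\[
\mu(t)^{-\widetilde\gamma}=\mu(t)^{-\gamma}\mu(t)^{\gamma-\widetilde\gamma}\le \mu(t)^{-\gamma},
\]
which yields $\mathcal U_\gamma\subseteq \mathcal U_{\widetilde\gamma}$. Symmetrically, for $t\le 0$ the factor $\mu(t)^{\widetilde\gamma-\gamma}\le 1$, so $\mu(t)^{-\gamma}\le \mu(t)^{-\widetilde\gamma}$, giving $\mathcal V_{\widetilde\gamma}\subseteq \mathcal V_\gamma$. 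There is no real obstacle in this lemma: the only pitfall is being careful that the inequalities flip correctly because the relevant half-line matches the regime where $\mu(t)\lessgtr 1$; once that bookkeeping is done, each item reduces to one line.
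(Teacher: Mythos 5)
Your proposal is correct and follows essentially the same route as the paper: the cocycle identity for flow invariance and the sign of the exponent $\gamma-\widetilde\gamma$ against $\mu(t)\gtrless 1$ for the inclusions. Your careful split into the regimes $t\ge 0$ (where $\mu(t)\ge 1$) and $t\le 0$ (where $\mu(t)\le 1$) is in fact slightly more precise than the paper's one-line justification of the monotonicity, which states a single inequality for both cases.
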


\begin{proof} Since
\[
\sup\limits_{t\ge 0}\|\Phi(t,\tau)\Phi(\tau,s)\xi\|\mu(t)^{-\gamma}=
\sup\limits_{t\ge 0}\|\Phi(t,s)\xi\|\mu(t)^{-\gamma}<\infty,
\]
we conclude that for any $(s,\xi)\in\mathcal U_\gamma$ we have $(\tau,\Phi(\tau,s)\xi)\in\mathcal
U_\gamma$ for all $\tau\in\R$. Thus $\mathcal U_\gamma$ is a linear integral manifold of system \eqref{eq:sist-linear}.
Using a similar argument we conclude that $\mathcal V_{\gamma}$ is also a linear integral manifold of system \eqref{eq:sist-linear}.

Since $\mu$ is increasing, $\gamma\le \widetilde\gamma$ implies $\mu(t)^{-\widetilde\gamma}\ge\mu(t)^{-\gamma}$ and we have immediately $\mathcal U_{\gamma}\subseteq\mathcal U_{\widetilde\gamma}$ and $\mathcal V_{\gamma}\supseteq \mathcal V_{\widetilde\gamma}$.
\end{proof}

\begin{lemma}\label{prop:exist-dich}
Let $\gamma\in\R$. If $x'=\left(A(t)-\gamma \frac{\mu'(t)}{\mu(t)} I\right)x$ admits a nonuniform $\mu$-dichotomy with the invariant projection $P$, then we have
$\mathcal U_\gamma=\mbox{\rm Im} P$, $\mathcal V_\gamma=\mbox{\rm Ker} P$ and $\mathcal U_\gamma\oplus \mathcal V_\gamma=\R\times \R^n$.
\end{lemma}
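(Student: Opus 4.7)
The plan is to move the spectral shift into a weight on $\Phi$ and then read off each inclusion from the two dichotomy estimates. Setting $y(t) = \mu(t)^{-\gamma} x(t)$ one checks that $y$ solves the shifted system, so its evolution operator equals $\Psi(t,s) = (\mu(t)/\mu(s))^{-\gamma} \Phi(t,s)$. Substituting this into~\eqref{eq:dichotomy1}--\eqref{eq:dichotomy2} for $\Psi$ with the projection $P$ gives weighted bounds on $\Phi$,
\begin{align*}
\|\Phi(t,s) P(s)\| &\le K \left(\tfrac{\mu(t)}{\mu(s)}\right)^{\alpha+\gamma} \mu(s)^{\sign(s)\theta}, \qquad t \ge s, \\
\|\Phi(t,s) Q(s)\| &\le K \left(\tfrac{\mu(t)}{\mu(s)}\right)^{\beta+\gamma} \mu(s)^{\sign(s)\nu}, \qquad t \le s,
\end{align*}
with $\alpha<0<\beta$, $\alpha+\theta<0$ and $\beta-\nu>0$. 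Everything below is a direct manipulation of these two bounds.

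For the easy inclusions, if $\xi = P(s)\xi \in \mbox{\rm Im}\, P(s)$ then, for $t \ge s$,
\[
\|\Phi(t,s)\xi\|\, \mu(t)^{-\gamma} \le K\, \mu(t)^{\alpha}\, \mu(s)^{-\alpha-\gamma+\sign(s)\theta}\|\xi\|,
\]
which tends to $0$ since $\alpha<0$ and $\mu(t) \to +\infty$; when $s > 0$, the remaining range $t \in [0, s]$ is compact and the map $t \mapsto \Phi(t, s)\xi\, \mu(t)^{-\gamma}$ is continuous there, so the supremum over $t \ge 0$ is finite. This proves $\mbox{\rm Im}\, P(s) \subseteq \mathcal U_\gamma(s)$. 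A symmetric argument using $\beta > 0$ and $\mu(t) \to 0$ as $t \to -\infty$ gives $\mbox{\rm Ker}\, P(s) \subseteq \mathcal V_\gamma(s)$.

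For the reverse inclusions, suppose $(s, \xi) \in \mathcal U_\gamma$ and split $\xi = P(s)\xi + Q(s)\xi$. Assuming $Q(s)\xi \ne 0$ towards a contradiction, I combine the invariance identity $Q(s)\xi = \Phi(s,t) Q(t) \Phi(t,s) Q(s)\xi$ with the second dichotomy bound applied to $\|\Phi(s,t) Q(t)\|$ (valid for $s \le t$) to obtain, for $t \ge \max(s, 0)$,
\[
\|\Phi(t,s) Q(s)\xi\|\, \mu(t)^{-\gamma} \ge K^{-1} \|Q(s)\xi\|\, \mu(s)^{-\beta-\gamma}\, \mu(t)^{\beta-\nu},
\]
which diverges as $t \to +\infty$ because $\beta - \nu > 0$. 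Since $\|\Phi(t,s) P(s)\xi\|\, \mu(t)^{-\gamma} \to 0$ by the previous paragraph, the triangle inequality forces $\|\Phi(t,s)\xi\|\, \mu(t)^{-\gamma} \to \infty$, contradicting $(s,\xi) \in \mathcal U_\gamma$; hence $\xi \in \mbox{\rm Im}\, P(s)$. The inclusion $\mathcal V_\gamma(s) \subseteq \mbox{\rm Ker}\, P(s)$ follows the mirror route: if $P(s)\xi \ne 0$, the analogous reverse estimate (now using the first dichotomy bound on $\|\Phi(s,t) P(t)\|$ for $s \ge t$) yields $\|\Phi(t,s) P(s)\xi\|\, \mu(t)^{-\gamma} \ge K^{-1} \|P(s)\xi\|\, \mu(s)^{-\alpha-\gamma}\, \mu(t)^{\alpha+\theta}$ for $t \le \min(s, 0)$, which blows up as $t \to -\infty$ because $\mu(t) \to 0$ and $\alpha + \theta < 0$. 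The Whitney sum statement $\mathcal U_\gamma \oplus \mathcal V_\gamma = \mathbb R \times \mathbb R^n$ is then just the fibrewise decomposition $\mbox{\rm Im}\, P(s) \oplus \mbox{\rm Ker}\, P(s) = \mathbb R^n$. The delicate point throughout is the sign management: the factors $\mu(\cdot)^{\sign(\cdot)\,\cdot}$ in the reverse inequalities must be carried through and only simplified after fixing $\sign(t) = \pm 1$ according to the direction of the limit, with the bounded crossover interval around $0$ handled separately by continuity.
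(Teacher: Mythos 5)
Your argument is correct and follows essentially the same route as the paper: pass to the weighted evolution operator $\left(\frac{\mu(t)}{\mu(s)}\right)^{-\gamma}\Phi(t,s)$ of the shifted system, read off $\mbox{\rm Im}\,P\subseteq\mathcal U_\gamma$ and $\mbox{\rm Ker}\,P\subseteq\mathcal V_\gamma$ directly from the two dichotomy estimates, and obtain the reverse inclusions from the invariance identity $Q(s)\xi=\Phi(s,t)Q(t)\Phi(t,s)Q(s)\xi$ combined with the complementary estimate (the paper shows the complementary component is zero by an upper bound tending to $0$, whereas you derive a diverging lower bound and a contradiction, which is the same computation). Your explicit handling of the compact crossover interval near $0$ and of the $\sign(t)$ factors is a small but welcome refinement of details the paper leaves implicit.
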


\begin{proof}
Let $\Phi(t,s)$ be the evolution operator of the linear equation \mbox{$x'=A(t)x$} and
\begin{equation}\label{eq:Phi-gamma}
\Phi_\gamma(t,s)=\left(\frac{\mu(t)}{\mu(s)}\right)^{-\gamma}\Phi(t,s).
\end{equation}
Noting that
\[
\begin{split}
\Phi_\gamma(t,s)'
& =-\gamma\left(\frac{\mu(t)}{\mu(s)}\right)^{-\gamma-1}\frac{\mu'(t)}{\mu(s)}\Phi(t,s)+\left(\frac{\mu(t)}{\mu(s)}\right)^{-\gamma}\Phi(t,s)'\\
& =\left(A(t)-\gamma\frac{\mu'(t)}{\mu(t)}I\right)\left(\frac{\mu(t)}{\mu(s)}\right)^{-\gamma}\Phi(t,s)\\
& =\left(A(t)-\gamma\frac{\mu'(t)}{\mu(t)}I\right)\Phi_\gamma(t,s)\\
\end{split}
\]
and $\Phi_\gamma(s,s)=\left(\frac{\mu(s)}{\mu(s)}\right)^{-\gamma}\Phi(s,s)=I$, we conclude that $\Phi_\gamma(t,s)$ is the evolution operator of the linear equation $x'=\left(A(t)-\gamma \frac{\mu'(t)}{\mu(t)} I\right)x$. Moreover, the projection $P$ still commutes with $\Phi_\gamma(t,s)$:
\[
\Phi_\gamma(t,s)P(s)=\left(\frac{\mu(t)}{\mu(s)}\right)^{-\gamma}\Phi(t,s)P(s)=\left(\frac{\mu(t)}{\mu(s)}\right)^{-\gamma}P(t)\Phi(t,s)=P(t)\Phi_\gamma(t,s).
\]

By the assumption, there exist
$K_\gamma\ge 1$, $\alpha_\gamma<0,\beta_\gamma>0$ and
$\theta_\gamma,\nu_\gamma\ge 0$ with $\alpha+\theta_\gamma<0$ and
$\beta-\nu_\gamma>0$ such that
\begin{equation}\label{eq:dich-phi-gamma}
\begin{split}
\|\Phi_\gamma(t,s)P(s)\|&\le K_\gamma
\left(\frac{\mu(t)}{\mu(s)}\right)^{\alpha_\gamma}\mu(s)^{\sign(s)\theta_\gamma} \ \ \mbox{ for all } \ \ t\ge s,\\
\|\Phi_\gamma(t,s)Q(s)\|&\le K_\gamma
\left(\frac{\mu(t)}{\mu(s)}\right)^{\beta_\gamma}\mu(s)^{\sign(s)\nu_\gamma} \ \ \mbox{ for all }  \ \ t\le s.
\end{split}
\end{equation}

First we prove $\mathcal U_\gamma\subseteq{\rm Im} P$.  Let
$(\tau,\xi)\in\mathcal U_\gamma$. By definition there exists a
constant $c_\gamma$ such that
\[
\|\Phi(t,\tau)\xi\|\le c_\gamma \mu(t)^\gamma \mbox{ for all } t\ge
0.
\]
It follows that
\begin{equation}\label{eq:Phittau_cgammamugamma}
\|\Phi_\gamma(t,\tau)\xi\|=\left(\frac{\mu(t)}{\mu(\tau)}\right)^{-\gamma}\|\Phi(t,\tau)\xi\|\le
c_\gamma \mu(\tau)^\gamma \mbox{ for all } t\ge 0.
\end{equation}
Let $\xi=\xi_1+\xi_2$ with $\xi_1\in{\rm Im}P(\tau)$ and
$\xi_2\in{\rm Ker}P(\tau)$. We have
\[
\begin{split}
\xi_2
& =(I-P(\tau))\xi=(I-\Phi_\gamma(\tau,t)\Phi_\gamma(t,\tau)P(\tau))\xi\\
& =(I-\Phi_\gamma(\tau,t)P(t)\Phi_\gamma(t,\tau))\xi=\Phi_\gamma(\tau,t)Q(t)\Phi_\gamma(t,\tau)\xi.
\end{split}
\]
Hence, using~\eqref{eq:Phittau_cgammamugamma}, we have, for $t\ge\tau\ge 0$,
\[
\|\xi_2\| \le K_\gamma \left(\frac{\mu(\tau)}{\mu(t)}\right)^{\beta_\gamma}\mu(t)^{\nu_\gamma}\|\Phi_\gamma(t,\tau)\xi\|
\le K_\gamma c_\gamma \mu(t)^{-(\beta_\gamma-\nu_\gamma)}\mu(\tau)^{\beta_\gamma+\gamma}.
\]
Therefore, $\xi_2=0$ because $\beta_\gamma-\nu_\gamma>0$. Thus $\xi=\xi_1\in{\rm Im}P(\tau)$. This proves that
$\mathcal U_\gamma\subseteq{\rm Im}P$.

To establish that ${\rm Im} P\subseteq \mathcal U_\gamma$, assume that
$\tau\in\R$, $\xi\in{\rm Im}P(\tau)$. Then $P(\tau)\xi=\xi$. By~\eqref{eq:dich-phi-gamma}, we have, for $t\ge\max\{\tau, 0\}$,
\[
\begin{split}
\|\Phi(t,\tau)\xi\|\mu(t)^{-\gamma}
& =\|\Phi_\gamma(t,\tau)P(\tau)\xi\|\mu(\tau)^{-\gamma}\\\
& \le K_\gamma
\mu(t)^{\alpha_\gamma}\mu(\tau)^{-(\gamma+\alpha_\gamma)}\mu(\tau)^{\sign(\tau)\theta_\gamma}\|\xi\|.
\end{split}
\]
This implies that $(\tau,\xi)\in \mathcal U_\gamma$, because
$\alpha_\gamma<0$. Hence ${\rm Im}P\subset \mathcal U_\gamma$.
We obtain ${\rm Im}P= \mathcal U_\gamma$.

Using $\alpha_\gamma+\theta_\gamma<0$, we can apply similar arguments to
prove that $\mathcal V_\gamma={\rm Ker}P$. Since $\mathcal U_\gamma={\rm Im}P$ and $\mathcal V_\gamma={\rm
Ker}P$, the identity $\mathcal U_\gamma\oplus\mathcal V_\gamma=\R\times\R^n$ follows.
\end{proof}

\begin{lemma}\label{lemma:resolvent}
The resolvent set $\rhoND_\mu(A)$ is open. Moreover, if $\gamma\in\rhoND_\mu(A)$ and
$J\subset \rhoND_\mu(A)$ is an interval containing $\gamma$, then
\[
\mathcal U_\eta=\mathcal U_\gamma,\quad \mathcal V_\eta=\mathcal
V_\gamma \quad \mbox{ for all } \eta\in J.
\]
\end{lemma}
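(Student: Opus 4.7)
The plan is to extract openness of $\rhoND_\mu(A)$ from a direct perturbation argument showing that if $\gamma \in \rhoND_\mu(A)$ with dichotomy projection $P_\gamma$, then the \emph{same} projection $P_\gamma$ realizes a nonuniform $\mu$-dichotomy for every $\eta$ in an open neighborhood of $\gamma$. Together with Lemma~\ref{prop:exist-dich}, this simultaneously yields openness and local constancy of $\mathcal{U}_\eta$ and $\mathcal{V}_\eta$ on $\rhoND_\mu(A)$; a connectedness argument will then settle the statement on $J$.

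For the perturbation step, I would start from the scaling identity
\[
\Phi_\eta(t,s) = \left(\frac{\mu(t)}{\mu(s)}\right)^{\gamma-\eta}\Phi_\gamma(t,s),
\]
which is immediate from \eqref{eq:Phi-gamma}. Since the factor is a positive scalar, it commutes with $P_\gamma$, so $P_\gamma$ is invariant for $\Phi_\eta$ as well. Plugging the bounds \eqref{eq:dich-phi-gamma} satisfied by $\Phi_\gamma$ into this identity shifts the decay exponent $\alpha_\gamma$ to $\alpha_\gamma + \gamma - \eta$ and the growth exponent $\beta_\gamma$ to $\beta_\gamma + \gamma - \eta$, while $K_\gamma,\theta_\gamma,\nu_\gamma$ are untouched. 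The dichotomy requirements $(\alpha_\gamma + \gamma - \eta) + \theta_\gamma < 0$ and $(\beta_\gamma + \gamma - \eta) - \nu_\gamma > 0$ cut out the open interval
\[
\eta \in (\gamma + \alpha_\gamma + \theta_\gamma,\; \gamma + \beta_\gamma - \nu_\gamma),
\]
which is nonempty and contains $\gamma$ precisely because $\alpha_\gamma + \theta_\gamma < 0$ and $\beta_\gamma - \nu_\gamma > 0$. This proves openness of $\rhoND_\mu(A)$; applying Lemma~\ref{prop:exist-dich} at $\gamma$ and at every such $\eta$ (with the common projection $P_\gamma$) then gives $\mathcal{U}_\eta = \imagem P_\gamma = \mathcal{U}_\gamma$ and $\mathcal{V}_\eta = \operatorname{Ker} P_\gamma = \mathcal{V}_\gamma$ throughout this neighborhood.

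For the global statement on $J$, I would let
\[
E = \{\eta \in J : \mathcal{U}_\eta = \mathcal{U}_\gamma \text{ and } \mathcal{V}_\eta = \mathcal{V}_\gamma\},
\]
which is nonempty since $\gamma \in E$. Applying the local constancy statement at each point of $J$ shows that $E$ and $J \setminus E$ are both open in $J$, so $E$ is clopen in the connected set $J$; hence $E = J$.

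The main point to be careful about is the bookkeeping on the five constants $K_\gamma,\alpha_\gamma,\beta_\gamma,\theta_\gamma,\nu_\gamma$: one must check that the shift $\gamma - \eta$ preserves \emph{both} strengthened conditions $\alpha + \theta < 0$ and $\beta - \nu > 0$, not merely the naive signs of $\alpha$ and $\beta$; this is what forces the radius of the perturbation interval to be the smaller of $-(\alpha_\gamma + \theta_\gamma)$ and $\beta_\gamma - \nu_\gamma$ rather than $-\alpha_\gamma$ and $\beta_\gamma$. Everything else is a clopen-subset argument on the connected interval $J$.
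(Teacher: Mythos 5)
Your proposal is correct and follows essentially the same route as the paper: the same scaling identity $\Phi_\eta(t,s)=\bigl(\mu(t)/\mu(s)\bigr)^{\gamma-\eta}\Phi_\gamma(t,s)$ with the unchanged projection, the same shift of the exponents $\alpha_\gamma,\beta_\gamma$ with the observation that the strengthened conditions $\alpha+\theta<0$ and $\beta-\nu>0$ determine the admissible window, and then Lemma~\ref{prop:exist-dich} for local constancy of $\mathcal U_\eta$ and $\mathcal V_\eta$. The only (cosmetic) difference is that you finish with a clopen-set argument on the connected interval $J$, whereas the paper covers the compact subinterval $[\gamma_0,\gamma]$ by the local-constancy neighborhoods; both are standard and equivalent here.
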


\begin{proof}
Let $\gamma\in\rhoND_\mu(A)$. Then $x'=\left(A(t)-\gamma \frac{\mu'(t)}{\mu(t)} I\right)x$
admits a N$\mu$D: there is some family of projections $P(t)$ and constants \mbox{$K\ge 1$}, $\alpha<0$, $\beta>0$
and $\theta,\nu\ge 0$, with $\alpha+\theta<0$ and $\beta-\nu>0$ such that
\begin{eqnarray*}
\|\Phi_\gamma(t,s)P(s)\|&\le& K \left(\frac{\mu(t)}{\mu(s)}\right)^{\alpha}\mu(s)^{\sign(s)\theta} \mbox{ for } t\ge s,\\
\|\Phi_\gamma(t,s)Q(s)\|&\le& K \left(\frac{\mu(t)}{\mu(s)}\right)^{\beta}\mu(s)^{\sign(s)\nu} \mbox{
for } t\le s.
\end{eqnarray*}
Let $0<\sigma<\min\{(\beta-\nu)/2,-(\alpha+\theta)/2\}$. It follow from the proof of Lemma~\ref{prop:exist-dich}, that
$P(t)$ is an invariant projection for the evolution operator
$\Phi_\eta(t,s)=\left(\frac{\mu(s)}{\mu(t)}\right)^\eta\Phi(t,s)$ of system $x'=\left(A(t)-\eta\frac{\mu'(t)}{\mu(t)} I\right)x$. Additionally, for
$\eta\in(\gamma-\sigma,\gamma+\sigma)$ we have
\[
\begin{split}
\|\Phi_\eta(t,s)P(s)\|&=\left(\frac{\mu(s)}{\mu(t)}\right)^{\eta-\gamma}\|\Phi_\gamma(t,s)P(s)\|\\
& \le K \left(\frac{\mu(t)}{\mu(s)}\right)^{\alpha+\gamma-\eta}\mu(s)^{\sign(s)\theta} \,\,\mbox{ for } t\ge s,\\
\|\Phi_\eta(t,s)Q(s)\|&=\left(\frac{\mu(s)}{\mu(t)}\right)^{\eta-\gamma}\|\Phi_\gamma(t,s)Q(s)\|\\
& \le K \left(\frac{\mu(t)}{\mu(s)}\right)^{\beta+\gamma-\eta}\mu(s)^{\sign(s)\nu} \,\,\mbox{ for } t\le s.
\end{split}
\]
This proves that
$x'=\left(A(t)-\eta \frac{\mu'(t)}{\mu(t)} I\right)x$ admits a N$\mu$D
for all $\eta\in (\gamma-\sigma,\gamma+\sigma)$. Consequently $(\gamma-\sigma,\gamma+\sigma)\subset \rhoND_\mu(A)$, for sufficiently small $\sigma>0$. Hence,
$\rhoND_\mu(A)$ is an open set. We also conclude that systems $x'=\left(A(t)-\eta \frac{\mu'(t)}{\mu(t)} I\right)x$ and $x'=\left(A(t)-\gamma \frac{\mu'(t)}{\mu(t)}I\right)x$
admit a N$\mu$D with the same
projection, $P(t)$, when $\eta\in(\gamma-\sigma,\gamma+\sigma)$. According to Lemma \ref{prop:exist-dich},
we have $\mathcal U_\eta=\mathcal U_\gamma={\rm Im}P$ and $\mathcal
V_\eta=\mathcal V_\gamma={\rm Ker}P$.

Let $J \subset \rhoND_\mu(A)$ be an interval containing $\gamma$ and $\gamma_0$. Assuming $\gamma_0\le\gamma$, we have $[\gamma_0,\gamma] \subset J$. For each $a\in
[\gamma_0,\gamma]$ there exists $\sigma_a>0$
such that $\mathcal U_\zeta=\mathcal U_a$ and
$\mathcal V_\zeta=\mathcal V_a$ for all
$\zeta\in(a-\sigma_a,a+\sigma_a)$. Since these open
intervals cover $[\gamma_0,\gamma]$, we get that  $\mathcal
U_\gamma=\mathcal U_{\gamma_0}$ and $\mathcal V_\gamma=\mathcal
V_{\gamma_0}$. The same argument leads to the conclusion that the same property holds when $\gamma_0>\gamma$.
Since $\gamma_0\in J$ is arbitrary, we obtain the result.
\end{proof}

The next lemma characterizes the intersection of the linear integral manifolds
$\mathcal U_{\gamma_2}$ and $\mathcal V_{\gamma_1}$ with $\gamma_1,\gamma_2\in\rhoND_\mu(A)$.

\begin{lemma}\label{prop:equiv}
Let $\gamma_1,\gamma_2\in\rhoND_\mu(A)$ and $\gamma_1<\gamma_2$. The
following conditions are equivalent.
\begin{enumerate}[\lb=$\alph*)$,\lm=6mm,\is=2mm]
\item\label{prop:equiv-1} $\mathcal U_{\gamma_2}\cap\mathcal V_{\gamma_1} \ne \R\times \{0\}$;
\item\label{prop:equiv-2} $[\gamma_1,\gamma_2]\cap\SigmaND_\mu(A)\ne \emptyset$;
\item\label{prop:equiv-3} $\rank\,\mathcal U_{\gamma_1}<\rank\,\mathcal U_{\gamma_2}$;
\item\label{prop:equiv-4} $\rank\,\mathcal V_{\gamma_1}>\rank\,\mathcal V_{\gamma_2}$.
\end{enumerate}
\end{lemma}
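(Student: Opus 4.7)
The plan is to fix $s\in\R$ and work at the level of fibers, exploiting Lemma~\ref{prop:exist-dich} to identify $\mathcal U_{\gamma_i}$ with $\imagem P_i$ and $\mathcal V_{\gamma_i}$ with $\ker P_i$ for invariant projections $P_1,P_2$ associated to the N$\mu$D of the shifted systems at $\gamma_1$ and $\gamma_2$, and to use the monotonicity of Lemma~\ref{lemma:linear-manif}, $\mathcal U_{\gamma_1}\subseteq\mathcal U_{\gamma_2}$ and $\mathcal V_{\gamma_2}\subseteq\mathcal V_{\gamma_1}$. I would then close the loop (a)$\Leftrightarrow$(c)$\Leftrightarrow$(d)$\Leftrightarrow$(b).

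For the linear-algebraic equivalences, given $W\supseteq\mathcal U_{\gamma_1}(s)$ and the splitting $\R^n=\mathcal U_{\gamma_1}(s)\oplus\mathcal V_{\gamma_1}(s)$, every $w\in W$ decomposes as $w=u+v$ with $u\in\mathcal U_{\gamma_1}(s)\subseteq W$, so $v=w-u\in W\cap\mathcal V_{\gamma_1}(s)$; hence
\[
W=\mathcal U_{\gamma_1}(s)\oplus\bigl(W\cap \mathcal V_{\gamma_1}(s)\bigr).
\]
Applying this with $W=\mathcal U_{\gamma_2}(s)$ gives $\dim\mathcal U_{\gamma_2}(s)=\dim\mathcal U_{\gamma_1}(s)+\dim\bigl(\mathcal U_{\gamma_2}(s)\cap\mathcal V_{\gamma_1}(s)\bigr)$, immediately yielding (a)$\Leftrightarrow$(c). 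The equivalence (c)$\Leftrightarrow$(d) is then immediate from $\dim\mathcal U_{\gamma_i}(s)+\dim\mathcal V_{\gamma_i}(s)=n$.

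For (c)$\Rightarrow$(b), I would argue by contraposition: if $[\gamma_1,\gamma_2]\subset\rhoND_\mu(A)$ then Lemma~\ref{lemma:resolvent} gives $\mathcal U_{\gamma_1}=\mathcal U_{\gamma_2}$, contradicting (c). For the converse (b)$\Rightarrow$(c), I again use contraposition: assuming $\mathcal U_{\gamma_1}=\mathcal U_{\gamma_2}$ (equivalently, $\mathcal V_{\gamma_1}=\mathcal V_{\gamma_2}$) the two dichotomies share a common projection $P$, and I would show that for every $\eta\in[\gamma_1,\gamma_2]$ the shifted system $x'=\bigl(A(t)-\eta\tfrac{\mu'(t)}{\mu(t)}I\bigr)x$ also admits a N$\mu$D with this same $P$. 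The key identity, from~\eqref{eq:Phi-gamma}, is
\[
\Phi_\eta(t,s)=\left(\frac{\mu(t)}{\mu(s)}\right)^{\gamma_i-\eta}\Phi_{\gamma_i}(t,s).
\]
I would use $\gamma_i=\gamma_1$ for the stable bound (since for $t\ge s$, $\mu(t)/\mu(s)\ge 1$ and $\gamma_1-\eta\le 0$, so the extra factor only improves the exponent, producing $\alpha_1+\gamma_1-\eta<0$ with unchanged $\theta_1$) and $\gamma_i=\gamma_2$ for the unstable bound (since for $t\le s$, $\mu(t)/\mu(s)\le 1$ and $\gamma_2-\eta\ge 0$, producing $\beta_2+\gamma_2-\eta>0$ with unchanged $\nu_2$). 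Verifying that the conditions $\alpha+\theta<0$ and $\beta-\nu>0$ persist is immediate from the original inequalities for $\gamma_1$ and $\gamma_2$, so $[\gamma_1,\gamma_2]\subset\rhoND_\mu(A)$ and (b) fails.

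The conceptual part is routine given Lemmas~\ref{prop:exist-dich} and~\ref{lemma:resolvent}; the main obstacle is the bookkeeping in the interpolation argument for (b)$\Rightarrow$(c), namely choosing which endpoint's dichotomy to use on each half-line so that the correction factor $(\mu(t)/\mu(s))^{\gamma_i-\eta}$ is always $\le 1$ on the relevant domain, thereby preserving the N$\mu$D inequalities with admissible exponents throughout the closed interval $[\gamma_1,\gamma_2]$.
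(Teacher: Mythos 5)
Your proposal is correct and follows essentially the same route as the paper: the rank identity $\rank\,\mathcal U_{\gamma_i}+\rank\,\mathcal V_{\gamma_i}=n$ from Lemma~\ref{prop:exist-dich} for \ref{prop:equiv-3}$\Leftrightarrow$\ref{prop:equiv-4}, Lemma~\ref{lemma:resolvent} for the contrapositive toward \ref{prop:equiv-2}, and the same two-sided interpolation (stable estimate from $\gamma_1$, unstable from $\gamma_2$, with the correction factor $(\mu(t)/\mu(s))^{\gamma_i-\eta}\le 1$ on the relevant half) for \ref{prop:equiv-2}$\Rightarrow$\ref{prop:equiv-3}. The only cosmetic difference is that you obtain \ref{prop:equiv-1}$\Leftrightarrow$\ref{prop:equiv-3} directly from the exact formula $\dim\mathcal U_{\gamma_2}(s)=\dim\mathcal U_{\gamma_1}(s)+\dim(\mathcal U_{\gamma_2}(s)\cap\mathcal V_{\gamma_1}(s))$, whereas the paper proves only the implication \ref{prop:equiv-3}$\Rightarrow$\ref{prop:equiv-1} via a dimension inequality and closes the cycle through \ref{prop:equiv-2}.
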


\begin{proof}
By Lemma~\ref{prop:exist-dich}, we have
$$\rank\,\mathcal U_{\gamma_1} + \rank\,\mathcal V_{\gamma_1}=\rank\,\mathcal U_{\gamma_2} + \rank\,\mathcal V_{\gamma_2}=n$$
and we immediately conclude the equivalence between~\ref{prop:equiv-3} and~\ref{prop:equiv-4}.

Assume that \ref{prop:equiv-3} holds. By Lemma~\ref{lemma:linear-manif}, taking into account that $\mathcal U_{\gamma_1}\subseteq \mathcal U_{\gamma_2}$, we have $\mathcal U_{\gamma_1}\cup\mathcal V_{\gamma_1} \subseteq \mathcal U_{\gamma_2}\cup\mathcal V_{\gamma_1} \subseteq\R\times\R^n$ and therefore
\[
\rank\,(\mathcal U_{\gamma_2}\cap\mathcal
V_{\gamma_1})\ge\rank\,\mathcal U_{\gamma_2}+\rank\,\mathcal
V_{\gamma_1}-n>\rank\,\mathcal U_{\gamma_1}+\rank\,\mathcal
V_{\gamma_1}-n=0.
\]
So $\mathcal U_{\gamma_2}\cap\mathcal V_{\gamma_1} \ne \R\times\{0\}$, which proves \ref{prop:equiv-1}.

Assume that \ref{prop:equiv-1} holds and assume, by contradiction, that $[\gamma_1,\gamma_2]\cap\SigmaND_\mu(A)= \emptyset$ and therefore that $[\gamma_1,\gamma_2]\subset\rhoND_\mu(A)$. It follows from Lemma~\ref{lemma:resolvent} that
\[
\mathcal U_{\gamma_2}\cap\mathcal V_{\gamma_1}= \mathcal
U_{\gamma_1}\cap\mathcal V_{\gamma_1}=\R\times\{0\},
\]
a contradiction with \ref{prop:equiv-1}. We conclude that \ref{prop:equiv-2} holds.

Next we assume, by contradiction, that \ref{prop:equiv-2} holds but \ref{prop:equiv-3} doesn't hold, that is, \ref{prop:equiv-2} holds but $\rank\,\mathcal U_{\gamma_1}\ge \rank\,\mathcal U_{\gamma_2}$. Since $\gamma_1 < \gamma_2$, by Lemma~\ref{lemma:linear-manif}, we must have $\rank\,\mathcal U_{\gamma_1}\le \rank\,\mathcal U_{\gamma_2}$. Thus $\rank\,\mathcal U_{\gamma_1}=\rank\,\mathcal U_{\gamma_2}$. By the
equivalence of \ref{prop:equiv-3} and \ref{prop:equiv-4} we also have $\rank \mathcal
V_{\gamma_1}=\rank \mathcal V_{\gamma_2}$. Since $\mathcal U_{\gamma_i}(t)$ and $\mathcal
V_{\gamma_i}(t)$, $i=1,2$,  are linear subspaces of $\R^{n}$, we must have $\mathcal U_{\gamma_1}(t)=\mathcal U_{\gamma_2}(t)$ and $\mathcal V_{\gamma_1}(t)=\mathcal V_{\gamma_2}(t)$. Thus $\mathcal U_{\gamma_1}=\mathcal U_{\gamma_2}$ and $\mathcal V_{\gamma_1}=\mathcal V_{\gamma_2}$.
As a consequence, by Lemma~\ref{prop:exist-dich}, the nonuniform $\mu$-dichotomies of $x'=\left(A(t)-\gamma_1 \frac{\mu'(t)}{\mu(t)}I\right)x$ and $x'=\left(A(t)-\gamma_2 \frac{\mu'(t)}{\mu(t)}I\right)x$ have the same invariant projection $P(t)$. Let $K_i\ge 1$, $\alpha_i<0$, $\beta_i>0$, $\theta_i\ge0$ and $\nu_i\ge 0$, satisfying $\alpha_i+\theta_i<0$ and $\beta_i-\nu_i>0$, for $i=1,2$, be such that
\[
\|\Phi_{\gamma_i}(t,s)P(s)\|\le  K_i\left(\frac{\mu(t)}{\mu(s)}\right)^{\alpha_i} \mu(s)^{\sign(s)\theta_i} \,\mbox{ for } t\ge s
\]
and
\[
\|\Phi_{\gamma_i}(t,s)Q(s)\|\le K_i\left(\frac{\mu(t)}{\mu(s)}\right)^{\beta_i} \mu(s)^{\sign(s)\nu_i}
\,\mbox{ for } t\le s.
\]

For $\gamma\in[\gamma_1,\gamma_2]$, we have for $t\ge s$
\begin{equation}\label{eq:sei-la-1}
\begin{split}
\|\Phi_{\gamma}(t,s)P(s)\|
& =\left(\frac{\mu(t)}{\mu(s)}\right)^{\gamma_1-\gamma}\|\Phi_{\gamma_1}(t,s)P(s)\|\\
& \le \|\Phi_{\gamma_1}(t,s)P(s)\|\\
& \le
K\left(\frac{\mu(t)}{\mu(s)}\right)^{\alpha_1} \mu(s)^{\sign(s)\theta_1}
\end{split}
\end{equation}
and, for $ t\le s$,
\begin{equation}\label{eq:sei-la-2}
\begin{split}
\|\Phi_{\gamma}(t,s)Q(s)\|
& =\left(\frac{\mu(t)}{\mu(s)}\right)^{\gamma_2-\gamma}\|\Phi_{\gamma_2}(t,s)Q(s)\|\\
& \le\|\Phi_{\gamma_2}(t,s)Q(s)\|\\
& \le K\left(\frac{\mu(t)}{\mu(s)}\right)^{\beta_2} \mu(s)^{\sign(s)\nu_2},
\end{split}
\end{equation}
where $K=\max\{K_1,K_2\}$. Since $\alpha_1+\theta_1<0$ and $\beta_2-\nu_2>0$, we conclude that $\Phi_\gamma$ admits a N$\mu$D with constants $\alpha_1$, $\beta_2$, $\theta_1$, $\nu_2$ and
$K$. Thus $\gamma\in\rhoND_\mu(A)$ and consequently
$[\gamma_1,\gamma_2]\subset\rhoND_\mu(A)$. This contradicts~\ref{prop:equiv-2}. We conclude that \ref{prop:equiv-3} hold and the lemma is established.
\end{proof}

Now we will use the auxiliary results obtained above to prove the theorem. According to Lemma \ref{lemma:resolvent}, $\rhoND_\mu(A)$ is a nonempty open subset of $\R$ and therefore it can be written as a finite or countable union of open mutually disjoint intervals. Thus, $\SigmaND_\mu(A)=\R\setminus \rhoND_\mu(A)$ is either empty or consists of a finite or countable union of closed intervals with vanishing intersection. Let $m \in \N \cup \{+\infty\}$ be the number of disjoint closed intervals whose union is $\SigmaND_\mu$:
    $$\SigmaND_\mu(A)= \cdots \cup [a_{i-1},b_{i-1}] \cup [a_i,b_i] \cup \cdots,$$
with $a_{i-1}\le b_{i-1}<a_i\le b_i$.  Assume, by contradiction, that $m>n$. Choose $n+1$ consecutive disjoint intervals, $[a_{k_0},b_{k_0}],\ldots,[a_{{k_0}+n},b_{{k_0}+n}]$, and, for $j=k_0,\ldots,k_0+n-1$, let $\gamma_j \in (b_j,a_{j+1})$. By the equivalence of~\ref{prop:equiv-1} and \ref{prop:equiv-3} in Lemma~\ref{prop:equiv}, we have
$$0\le\rank\,\mathcal U_{\gamma_{k_0}}<\rank\,\mathcal U_{\gamma_{k_0+1}}<\ldots<\rank\,\mathcal U_{\gamma_{k_0+n}}\le n,$$
which allows us to conclude that $\rank\,\mathcal U_{\gamma_{k_0+n}}=n$ or $\rank\,\mathcal U_{\gamma_{k_0}}=0$.

If $\rank\,\mathcal U_{\gamma_{k_0+n}}=n$, we must have $P=I$, $\mathcal U_{\gamma_{k_0+n}}=\R\times\R^n$ and $\mathcal V_{\gamma_{k_0+n}}=\R\times\{0\}$. By definition, $x'=\left(A(t)-\gamma \frac{\mu'(t)}{\mu(t)} I\right)x$ admits a N$\mu$D with invariant projection $P=I$ for $\gamma = \gamma_{k_0+n}$. Thus, for $\gamma > \gamma_{k_0+n}$ and $t\ge s$ we have
$$\|\Phi_\gamma(t,s)\|=\left(\frac{\mu(t)}{\mu(s)}\right)^{\gamma_{k_0+n}-\gamma}\left\|\Phi_{\gamma_{k_0+n}}(t,s)\right\|\le
K\left(\frac{\mu(t)}{\mu(s)}\right)^{\gamma_{k_0+n}-\gamma+\alpha} \mu(s)^{\sign(s)\theta},$$
for some $K\ge 1$, $\alpha<0$ and $\theta\ge0$, with $\alpha+\theta<0$ (which implies that $\gamma_{k_0+n}-\gamma+\alpha+\theta<0$). This shows that $x'=\left(A(t)-\gamma \frac{\mu'(t)}{\mu(t)} I\right)x$ admits a N$\mu$D with the invariant projection $P=I$ for all $\gamma > \gamma_{k_0+n}$.

If $\rank\,\mathcal U_{\gamma_{k_0}}=0$, we must have $P=0$, $\mathcal U_{\gamma_{k_0}}=\R\times\{0\}$ and $\mathcal V_{\gamma_{k_0}}=\R\times\R^n$. By definition, $x'=\left(A(t)-\gamma \frac{\mu'(t)}{\mu(t)} I\right)x$ admits a N$\mu$D with the invariant projection $P=0$ for $\gamma = \gamma_{k_0}$.
Thus, for $\gamma < \gamma_{k_0}$ and $t\le s$ we have
$$\|\Phi_\gamma(t,s)\|=\left(\frac{\mu(t)}{\mu(s)}\right)^{\gamma_{k_0}-\gamma}\|\Phi_{\gamma_{k_0}}(t,s)\|
\le K\left(\frac{\mu(t)}{\mu(s)}\right)^{\gamma_{k_0}-\gamma+\beta} \mu(s)^{\sign(s)\nu},$$
for some $K\ge 1$, $\beta>0$ and $\nu\ge0$, with $\beta-\nu>0$ (which implies that $\gamma_{k_0}-\gamma+\beta-\mu>0$). This shows that $x'=\left(A(t)-\gamma \frac{\mu'(t)}{\mu(t)} I\right)x$ admits a N$\mu$D with the invariant projection $P=0$ for all $\gamma < \gamma_{k_0}$.

This is in contradiction with the assumption that $m>n$. We obtain~\ref{thm:spectrum-1}.

We now prove~\ref{thm:spectrum-2}. We begin by noting that, by Lemma~\ref{prop:exist-dich}, the spectral manifolds $\mathcal W_i$ are independent of the choice of $\gamma_i$.

Assuming that $I_1=[a_1,b_1]$, take $\gamma_0 \in (-\infty,a_1)$ and $\gamma_1\in(b_1,a_2)$. Then $\gamma_0,
\gamma_1 \in \rhoND_\mu(A)$ and
$[\gamma_0,\gamma_1]\cap\SigmaND_\mu(A)\ne\emptyset$. By Lemmas~\ref{lemma:linear-manif} and
\ref{prop:equiv} we have $\mathcal U_{\gamma_0}\subsetneq\mathcal
U_{\gamma_1}$. Since $\mathcal U_{\gamma_0}\oplus\mathcal
V_{\gamma_0}=\R\times\R^n$, we must have $\mathcal
U_{\gamma_0}\cap\mathcal V_{\gamma_0} \subsetneq \mathcal
U_{\gamma_1}\cap\mathcal V_{\gamma_0} = \mathcal W_1$. Since $\mathcal W_1$ is a linear
integral manifold, we conclude that $\rank\mathcal W_1\ge 1$ .

If $I_1= (-\infty,b_1]$ then $\mathcal U_{\gamma_0}=\R\times\{0\}$ which implies $\mathcal V_{\gamma_0}=\R\times\R^n$. Thus $\mathcal W_1=\mathcal U_{\gamma_1}\cap\mathcal V_{\gamma_0}=\mathcal U_{\gamma_1}$. By contradiction, if $\rank\mathcal W_1=0$ we would have $\mathcal W_1=\R\times
\{0\}$ and thus we would have a
N$\mu$D for $x'=\left(A(t)-\gamma_1 \frac{\mu'(t)}{\mu(t)}I\right)x$ with the invariant projection $P\equiv 0$. Proceeding as in the proof of \ref{thm:spectrum-1}, we conclude that
$(-\infty,\gamma_1]\subset\rhoND_\mu(A)$, a contradiction with
the choice of $\gamma_1$. We conclude that $\rank\mathcal W_1\ge 1$ also in this case.

For $i=\{1,\ldots,m\}$, we have $\gamma_{i-1},\gamma_i\in\rhoND_\mu(A)$ and
$[\gamma_{i-1},\gamma_i]\cap\SigmaND_\mu(A)\ne\emptyset$. Thus, by Lemma~\ref{prop:equiv}, we have
$\mathcal U_{\gamma_i}\cap\mathcal V_{\gamma_{i-1}}=\mathcal W_i \ne \R \times \{0\}$ and consequently
$\rank\mathcal W_i\ge 1$. Additionally, since $\mathcal V_{\gamma_{i+1}}\subset\mathcal V_{\gamma_{i}}$, we have, for $i=0,\ldots,m-1$,
\[
\begin{split}
\mathcal W_{i+1}+\mathcal V_{\gamma_{i+1}}
& = \{(\tau,\eta) \in \R\times\R^n: \eta \in \mathcal W_{i+1}(\tau)+\mathcal V_{\gamma_i}(\tau)\}\\
& = \{(\tau,\eta) \in \R\times\R^n: \eta \in \mathcal V_{\gamma_i}(\tau) \cap \mathcal U_{\gamma_{i+1}}(\tau)+\mathcal V_{\gamma_i}(\tau)\}\\
& = \{(\tau,\eta) \in \R\times\R^n: \eta \in \mathcal V_{\gamma_i}(\tau) \cap \left(\mathcal U_{\gamma_{i+1}}(\tau)+\mathcal V_{\gamma_{i+1}}(\tau)\right)\}\\
& = \{(\tau,\eta) \in \R\times\R^n: \eta \in \mathcal V_{\gamma_i}(\tau)\} =\mathcal V_{\gamma_i}.
\end{split}
\]
As a consequence, we have
\[
\begin{split}
\mathcal W_{0}+\mathcal W_1+\ldots+\mathcal
W_{m}+\mathcal W_{m+1}
& = \mathcal W_{0}+\mathcal W_1+\ldots+\mathcal W_{m}+\mathcal
V_{\gamma_m}\\
& = \mathcal W_{0}+\mathcal W_1+\ldots+\mathcal W_{m-1}+\mathcal V_{\gamma_{m-1}}\\
& =\ldots\\
& = \mathcal W_{0}+\mathcal W_1+\mathcal V_{\gamma_1}=\mathcal W_{0}+\mathcal V_{\gamma_0}=\mathcal U_{\gamma_0}+\mathcal V_{\gamma_0} =\R\times \R^n.
\end{split}
\]
By Lemma~\ref{lemma:linear-manif}, we have
$$\mathcal W_i\cap\mathcal W_j\subseteq\mathcal U_{\gamma_i}\cap\mathcal V_{\gamma_{j-1}}\subseteq\mathcal U_{\gamma_i}\cap\mathcal V_{\gamma_{i}}= \R\times\{0\},$$
for $0\le i<j\le m+1$.
This proves that $\R\times\R^n=\mathcal W_0\oplus\mathcal W_1\oplus \ldots\oplus\mathcal W_{m+1}$. We conclude that~\ref{thm:spectrum-2} holds
and the theorem is proved.
\end{proof}

Given $\eps\ge 0$ and a growth rate $\mu$, we say that the evolution operator $\Phi(t,s)$ of $x'=A(t)x$ has {\it nonuniformly bounded growth with respect to $(\mu,\eps)$} if there are $K \ge 1$ and $a\ge 0$ such that
\begin{equation}\label{eq:bounded-growth}
\|\Phi(t,s)\|\le K\left(\frac{\mu(t)}{\mu(s)}\right)^{\text{sgn}(t-s) a} \mu(s)^{\sign(s)\eps},\quad t,s\in\R.
\end{equation}
When $\eps=0$ the evolution operator is said to have {\it bounded growth
with respect to $\mu$}.

The next result shows that nonuniformly bounded growth with respect to the growth rate $\mu$ is a sufficient condition for the N$\mu$D spectrum to be nonempty and bounded.

\begin{theorem}\label{thm:bounded}
If the evolution operator of system \eqref{eq:sist-linear} has nonuniformly
bounded growth with respect to the growth rate $\mu$ then the nonuniform $\mu$-dichotomy spectrum $\SigmaND_\mu(A)$ of
system \eqref{eq:sist-linear} is $\SigmaND_\mu(A)=[a_1,b_1]\cup\ldots\cup [a_m,b_m]$ with $a_i,b_i \in \R$, for some $i\in\{1,\ldots,m\}$, and $b_i < a_{i+1}$, for $i=1,\ldots,m-1$.
\end{theorem}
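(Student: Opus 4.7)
The plan is to combine the structural description from Theorem~\ref{thm:spectrum}\ref{thm:spectrum-1} with an explicit argument showing that, under the nonuniform bounded growth hypothesis, $\SigmaND_\mu(A)$ is contained in a bounded interval of $\R$. Once boundedness is established, the only admissible configurations in Theorem~\ref{thm:spectrum}\ref{thm:spectrum-1} are the empty set or a finite union of bounded closed intervals $[a_1,b_1]\cup\cdots\cup[a_m,b_m]$, since the full-line case and the unbounded tails $(-\infty, b_1]$ and $[a_m, +\infty)$ would contradict the boundedness of the spectrum; the ordering $b_i < a_{i+1}$ and the disjointness are then inherited directly from Theorem~\ref{thm:spectrum}\ref{thm:spectrum-1c}.

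To prove boundedness, let $K \ge 1$ and $a \ge 0$ be the constants in \eqref{eq:bounded-growth} and recall from the proof of Lemma~\ref{prop:exist-dich} that the evolution operator of the shifted equation $x' = (A(t) - \gamma \mu'(t)/\mu(t) I)x$ is $\Phi_\gamma(t,s) = (\mu(t)/\mu(s))^{-\gamma}\Phi(t,s)$. For any $\gamma > a + \eps$, I would take the trivial family of projections $P(t) \equiv I$, $Q(t) \equiv 0$; multiplying the $\sign(t-s)=+1$ case of \eqref{eq:bounded-growth} by $(\mu(t)/\mu(s))^{-\gamma}$ yields
\[
\|\Phi_\gamma(t,s)P(s)\| \le K \left(\frac{\mu(t)}{\mu(s)}\right)^{a-\gamma}\mu(s)^{\sign(s)\eps}\quad \text{for } t \ge s,
\]
which matches \eqref{eq:dichotomy1} with $\alpha = a - \gamma$ and $\theta = \eps$ satisfying both $\alpha < 0$ and $\alpha + \theta < 0$ thanks to $\gamma > a + \eps$. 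The companion estimate \eqref{eq:dichotomy2} holds trivially with any admissible $\beta, \nu$ since $Q \equiv 0$. Hence $(a+\eps, +\infty) \subseteq \rhoND_\mu(A)$.

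A symmetric argument handles $\gamma < -a - \eps$: take $P(t) \equiv 0$, $Q(t) \equiv I$, and multiply the $\sign(t-s)=-1$ case of \eqref{eq:bounded-growth} by the same factor to verify \eqref{eq:dichotomy2} with $\beta = -a - \gamma$ and $\nu = \eps$, which satisfy $\beta > 0$ and $\beta - \nu > 0$. Thus $(-\infty, -a-\eps) \subseteq \rhoND_\mu(A)$ and therefore $\SigmaND_\mu(A) \subseteq [-a-\eps, a+\eps]$, as required.

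I do not expect a genuine obstacle here; the entire argument reduces to matching \eqref{eq:bounded-growth} against \eqref{eq:dichotomy1}--\eqref{eq:dichotomy2} with trivial projections, followed by an appeal to the structure theorem. The only subtlety worth flagging is that the defining inequalities $\alpha + \theta < 0$ and $\beta - \nu > 0$ are strict, which is precisely what forces the threshold to be $\pm(a+\eps)$ rather than $\pm a$; if $\eps = 0$ the bound sharpens to $\SigmaND_\mu(A) \subseteq [-a, a]$.
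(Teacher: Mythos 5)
Your boundedness argument is exactly the one in the paper: multiply the two cases of \eqref{eq:bounded-growth} by $(\mu(t)/\mu(s))^{-\gamma}$, use the trivial projections $P\equiv I$ (resp.\ $P\equiv 0$) to verify \eqref{eq:dichotomy1}--\eqref{eq:dichotomy2} for $\gamma>a+\eps$ (resp.\ $\gamma<-a-\eps$), and conclude $\SigmaND_\mu(A)\subseteq[-a-\eps,a+\eps]$. That part is correct.

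However, there is a genuine gap: you explicitly leave open the possibility that $\SigmaND_\mu(A)=\emptyset$, whereas the theorem asserts that the spectrum is a \emph{nonempty} union $[a_1,b_1]\cup\ldots\cup[a_m,b_m]$ (the paper introduces the result as showing that nonuniformly bounded growth is ``a sufficient condition for the N$\mu$D spectrum to be nonempty and bounded''). Nonemptiness does not follow from Theorem~\ref{thm:spectrum} plus boundedness; it needs a separate argument. The paper supplies one: from the boundedness computation one has $\mathcal V_\gamma=\R\times\R^n$ for $\gamma<-a-\eps$ and $\mathcal V_\gamma=\R\times\{0\}$ for $\gamma>a+\eps$, so
\[
\gamma^*=\sup\{\gamma\in\rhoND_\mu(A):\,\mathcal V_\gamma=\R\times\R^n\}
\]
is a well-defined real number in $[-a-\eps,a+\eps]$. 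If $\gamma^*$ belonged to the resolvent set, Lemma~\ref{lemma:resolvent} would give an interval $(\gamma^*-\delta,\gamma^*+\delta)\subset\rhoND_\mu(A)$ on which $\mathcal V_\gamma$ is constant equal to $\mathcal V_{\gamma^*}$, contradicting the definition of $\gamma^*$ as a supremum. Hence $\gamma^*\in\SigmaND_\mu(A)$ and the spectrum is nonempty. You should add this (or an equivalent) step; without it your proof only establishes that the spectrum is bounded and, if nonempty, of the stated form.
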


\begin{proof}
By the assumption the evolution operator
$\Phi(t,s)$ of  system \eqref{eq:sist-linear} has a nonuniformly bounded
growth with respect to the growth rate $\mu$: there are $K \ge 1$, $a\ge 0$ and
$\eps\ge 0$ such that~\eqref{eq:bounded-growth} holds. By~\eqref{eq:bounded-growth}, for $t\ge s$ we have
\[
\|\Phi_\gamma(t,s)\|\le \left(\frac{\mu(t)}{\mu(s)}\right)^{-\gamma}\|\Phi(t,s)\|\le K\left(\frac{\mu(t)}{\mu(s)}\right)^{-\gamma+a} \mu(s)^{\sign(s)\eps}.
\]
For $\gamma>a+\eps \ \Leftrightarrow \ -\gamma+a+\eps< 0$, system $x'=\left(A(t)-\gamma \frac{\mu'(t)}{\mu(t)} I\right)x$
admits a N$\mu$D with the invariant projection $P=I$. We conclude that $(a+\eps,+\infty) \in \rhoND_\mu(A)$.
Again by~\eqref{eq:bounded-growth}, for $t\le s$ we have
\[
\|\Phi_\gamma(t,s)\|\le \left(\frac{\mu(t)}{\mu(s)}\right)^{-\gamma}\|\Phi(t,s)\|\le K\left(\frac{\mu(t)}{\mu(s)}\right)^{-\gamma-a} \mu(s)^{\sign(s)\eps}.
\]
For $\gamma<-a-\eps \ \Leftrightarrow \ -\gamma-a-\eps> 0$, system $x'=\left(A(t)-\gamma \frac{\mu'(t)}{\mu(t)} I\right)x$
admits a N$\mu$D with the invariant projection $P\equiv 0$. We conclude that
$(-\infty,-a-\eps)\subset\rhoND_\mu(A)$. Finally, since
$$\SigmaND_\mu(A)=\R\setminus \rhoND_\mu(A)\subseteq \R\setminus\{(-\infty,-a-\eps)\cup(a+\eps,+\infty)\}=[-a-\eps,a+\eps],$$
we conclude that $\SigmaND_\mu(A)$ is bounded.

By the above proof, for $\gamma>a+\eps$, we have $\mathcal U_\gamma={\rm
Im}\,I=\R\times\R^n$ and $\mathcal V_\gamma={\rm
Ker}\,I=\R\times\{0\}$ and for $\gamma<-a-\eps$, we have $\mathcal U_\gamma={\rm Im}\,0=\R\times\{0\}$ and
$\mathcal V_\gamma={\rm Ker}\,0=\R\times\R^n$. Let
\[
\gamma^*=\sup\{\gamma\in\rhoND_\mu(A):\,\mathcal
V_\gamma=\R\times\R^n\}.
\]
We have $\gamma^*\in[-a-\eps,a+\eps]$. If $\gamma^*$ were in $\rhoND_\mu(A)$, by Lemma~\ref{lemma:resolvent}, there
would be $\delta>0$ such that $(\gamma^*-\delta,\gamma^*+\delta) \, \subset
\rhoND_\mu(A)$ and thus we would have $\mathcal V_\gamma=\mathcal V_{\gamma^*}$ for any $\gamma\in \,(\gamma^*-\delta,\gamma^*+\delta)$, contradicting
the definition of $\gamma^*$. Therefore, $\gamma^*\in\SigmaND_\mu(A)$ and we conclude that $\SigmaND_\mu(A)\ne\emptyset$. Taking into account the conclusions of Theorem~\ref{thm:spectrum}, the result is established.
\end{proof}

Next, present a family of simple examples of nonautonomous systems of linear differential equations for which we are able to compute the nonuniform $\mu$-dichotomy spectrum.

\begin{example}\label{example1}
Let $\alpha<0$, $\beta >0$, $\theta,\nu\ge0$ with $\alpha+\theta<0$ and $\beta-\nu>0$ and let $\mu:\R\to\R^+$ be a differentiable growth rate. Define
\small{
\[
A(t)=
\left[
\begin{array}{cc}
a_1(t) & 0 \\
0 & a_2(t)\\
\end{array}
\right].
\]}
where
$$a_1(t)=\alpha\frac{\mu^{\prime}(t)}{\mu(t)}+\theta\sign(t)\left(\frac{\mu^{\prime}(t)}{\mu(t)} \frac{\cos t-1}{2}-\log \mu(t) \frac{\sin t}{2}\right)$$
and
$$a_2(t)=\beta\frac{\mu^{\prime}(t)}{\mu(t)}+\nu\sign(t)\left(\frac{\mu^{\prime}(t)}{\mu(t)} \frac{\cos t-1}{2}-\log \mu(t) \frac{\sin t}{2}\right).$$
Consider the differential equation in $\mathbb{R}^{2}$ given by
\begin{equation}\label{eq:exemplo}
\left[
\begin{array}{c}
u'\\ v'
\end{array}
\right]
=
A(t)
\left[
\begin{array}{c}
u\\ v
\end{array}
\right].
\end{equation}
The evolution operator of this equation is given by
$$
\Phi(t,s)(u,v)=(U(t,s)u,V(t,s)v),
$$
where
\[
U(t, s) =\left(\frac{\mu(t)}{\mu(s)}\right)^\alpha \frac{\mu(t)^{\theta\sign(t)(\cos t-1) / 2}}{\mu(s)^{\theta\sign(s)(\cos s-1) / 2}}
\]
and
\[
V(t, s) =\left(\frac{\mu(t)}{\mu(s)}\right)^\beta \frac{\mu(t)^{\nu\sign(t)(\cos t-1) / 2}}{\mu(s)^{\nu\sign(s)(\cos s-1) / 2}}
\]
For the projections $P_1(t): \mathbb{R}^{2} \rightarrow \mathbb{R}^{2}$ defined by $P_1(t)(u, v)=(u, 0)$ we have
$$
\begin{aligned}
&\left\|\Phi(t,s) P_1(s)\right\|=|U(t, s)| \leqslant \left(\frac{\mu(t)}{\mu(s)}\right)^\alpha \mu(s)^{\sign(s) \theta}, \quad \quad \text{for} \quad t \ge s\\
&\left\|\Phi(t,s) Q_1(t)\right\|=\left|V(t, s)\right| \leqslant \left(\frac{\mu(t)}{\mu(s)}\right)^\beta \mu(s)^{\sign(s) \nu}, \quad \quad \text{for} \quad t \le s
\end{aligned}
$$
and thus the equation admits a N$\mu$D. Moreover, if $t=2 k \pi$ and $s=(2 k-1) \pi, k \in \mathbb{N}$, then
\begin{equation}\label{eq:no-unif-dic-1}
\left\|\Phi(t,s) P_1(s)\right\|=\left(\frac{\mu(t)}{\mu(s)}\right)^\alpha \mu(s)^{\sign(s)\theta}
\end{equation}
and if $t=(2 k-1) \pi$ and $s=2 k \pi, k \in \mathbb{N}$, then
\begin{equation}\label{eq:no-unif-dic-2}
\left\|\Phi(t,s) Q_1(t)\right\|=\left(\frac{\mu(t)}{\mu(s)}\right)^\beta \mu(t)^{\sign(t)\nu}.
\end{equation}
Note now that
$$
\Phi_\gamma(t,s)(u,v)=(U_\gamma(t,s)u,V_\gamma(t,s)v),
$$
where
\[
U_\gamma(t, s) =\left(\frac{\mu(t)}{\mu(s)}\right)^{-\gamma} U(t, s)
\quad\quad \text{and} \quad\quad
V_\gamma(t, s) =\left(\frac{\mu(t)}{\mu(s)}\right)^{-\gamma} V(t, s).
\]
Thus
$$
\begin{aligned}
&\left\|\Phi_\gamma(t,s) P_1(s)\right\|\leqslant \left(\frac{\mu(t)}{\mu(s)}\right)^{\alpha-\gamma} \mu(s)^{\sign(s) \theta}, \quad \quad \text{for} \quad t \ge s\\
&\left\|\Phi_\gamma(t,s) Q_1(t)\right\|\leqslant \left(\frac{\mu(t)}{\mu(s)}\right)^{\beta-\gamma} \mu(s)^{\sign(s) \nu}, \quad \quad \text{for} \quad t \le s.
\end{aligned}
$$
We conclude that if $\alpha-\gamma+\theta<0 \ \Leftrightarrow \ \gamma >\alpha+\theta$ and $\beta-\gamma-\nu>0 \ \Leftrightarrow \ \gamma < \beta-\nu$, that is if $\gamma \in \, (\alpha+\theta,\, \beta-\nu)$, then equation~\eqref{eq:exemplo} admits a N$\mu$D.

Letting $P_2(t)=I$, where $I$ the identity, for all $t \in \R$, we have
$$
\left\|\Phi_\gamma(t,s) P_2(s)\right\|\leqslant \left(\frac{\mu(t)}{\mu(s)}\right)^{\beta-\gamma} \mu(s)^{\sign(s) \nu}, \quad \quad \text{for} \quad t \ge s
$$
and we conclude that if $\beta-\gamma+\nu<0 \ \Leftrightarrow \ \gamma >\beta+\nu$, that is, if $\gamma \in \, (\beta+\nu,\, +\infty)$, then equation~\eqref{eq:exemplo} admits a N$\mu$D.

Letting $P_3(t)=0$, where $0$ the null projection, for all $t \in \R$, we have
$$
\left\|\Phi_\gamma(t,s) Q_3(t)\right\|\leqslant \left(\frac{\mu(t)}{\mu(s)}\right)^{\alpha-\gamma} \mu(s)^{\sign(s) \theta}, \quad \quad \text{for} \quad t \le s
$$
and we conclude that if $\alpha-\gamma-\theta>0 \ \Leftrightarrow \ \gamma <\alpha-\theta$, that is if $\gamma \in \, (-\infty,\alpha-\theta)$, then equation~\eqref{eq:exemplo} admits a N$\mu$D.
    By the above, we can conclude that
    $$\SigmaND_\mu(A)=[\alpha-\theta,\alpha+\theta]\cup[\beta-\nu,\beta+\nu].$$
Letting $\gamma_0 \in \, (-\infty,\alpha-\theta)$, $\gamma_1 \in \, (\alpha+\theta,\, \beta-\nu)$ and $\gamma_2 \in \, (\beta+\nu,\, +\infty)$, we have
$\mathcal U_{\gamma_0}=\mathcal V_{\gamma_2}=\R \times \{(0,0)\}$,
$\mathcal U_{\gamma_2}=\mathcal V_{\gamma_0}=\R \times \R^2$,
$\mathcal U_{\gamma_1} =\R \times \{(r,0)\in \R^2: r \in \R\}$
and $\mathcal V_{\gamma_1}=\R \times \{(0,r)\in \R^2: r \in \R\}$.
Thus
\[
\mathcal W_0=\mathcal U_{\gamma_0}=\R \times \{(0,0)\},
\]
\[
\mathcal W_1=\mathcal U_{\gamma_1} \cap \mathcal V_{\gamma_0}=\mathcal U_{\gamma_1}
=\R \times \sg \{(1,0)\},
\]
\[
\mathcal W_2=\mathcal U_{\gamma_2} \cap \mathcal V_{\gamma_1}=\mathcal V_{\gamma_1}=\R \times \sg \{(0,1)\},
\]
and
\[
\mathcal W_3=\mathcal V_{\gamma_2}=\R \times \{(0,0)\}.
\]
Finally,
\[
\R\times\R^n=\mathcal W_0\oplus\mathcal W_1\oplus\mathcal W_2\oplus\mathcal W_3=\mathcal W_1\oplus\mathcal W_2.
\]
Note that $\mathcal W_1$ is the integral manifold corresponding to forward nonuniform contraction with respect to the growth rate $\mu$ and $\mathcal W_2$ is the integral manifold corresponding to backward nonuniform contraction with respect to the growth rate $\mu$. Furthermore, by~\eqref{eq:no-unif-dic-1}--\eqref{eq:no-unif-dic-2}, there is no (uniform) $\mu$-dichotomy for any $\gamma$ and we have $\SigmaD_\mu(A)=\R$. Notice also that, if
\[
\lim_{t \to +\infty} \frac{\mu(t)}{\e^{at}}=0,
\]
for each $a>0$, then equations~\eqref{eq:no-unif-dic-1}--\eqref{eq:no-unif-dic-2} imply that $\SigmaND(A)=\R$. In particular, the nonuniform polynomial dichotomy spectrum for equation~\eqref{eq:exemplo}, with $\mu$ given by~\eqref{eq:poly-dich-R}, is different from the nonuniform dichotomy spectrum for equation~\eqref{eq:exemplo} with $\mu$ given by~\eqref{eq:poly-dich-R} (that in this case is the whole $\R$).
\end{example}

To better understand the information one can obtain from the nonuniform $\mu$-dichotomy spectrum, we consider a notion of Lyapunov exponent for nonuniform $\mu$-dichotomies. This generalized type of Lyapunov exponents were introduced by Barreira and Valls in~\cite{Barreira-Valls-NA-2009} (in the particular case of polynomial growth). In that paper, the authors show that the notion of polynomial Lyapunov exponent is in fact a Lyapunov exponent in the sense of the abstract theory~\cite{Barreira-Pesin-ULS-2002}. See also~\cite{Barreira-Valls-DCDS-2012}.

With the usual convention that $\log 0=-\infty$, define the $\mu$-Lyapunov exponent associated with system~\eqref{eq:sist-linear} as the function
$\lambda^+: \R^n \to [-\infty,+\infty]$ given by
$$
\lambda^+(v)=\limsup_{t \rightarrow +\infty} \frac{\log \|\Phi(t,s)v\|}{\log \mu(t)}.
$$

Also consider the function $\lambda^-: \R^n \to [-\infty,+\infty]$ given by
$$
\lambda^-(v)=\liminf_{t \rightarrow +\infty} \frac{\log \|\Phi(t,s)v\|}{\log \mu(t)}.
$$

The next result relates the numbers above with the nununiform $\mu$-spectrum of system~\eqref{eq:sist-linear}.
\begin{theorem}\label{thm:relation-Lyap-exp}
Let $\mu:\R\to\R^+$ be a differentiable growth rate, $m > 1$, and let $\SigmaND_\mu(A)$
be the nonuniform $\mu$-dichotomy spectrum of system \eqref{eq:sist-linear}:
$$\SigmaND_\mu(A)=I_1 \cup [a_2,b_2]\cup \cdots \cup [a_{m-1},b_{m-1}] \cup I_m,$$
where $I_1=[a_1,b_1]$ or $I_1=(-\infty,b_1]$ and $I_m=[a_m,b_m]$ or $I_m=[a_m,+\infty)$.
Given a bounded connected component $[a_i,b_i]$ of $\SigmaND_\mu(A)$, the following holds: if $(s,v) \in \mathcal W_i={\mathcal U}_{\gamma_i} \cap \mathcal V_{\gamma_{i-1}}\setminus \{0\}$, we have
$$a_i \le \lambda^-(v)\le \lambda^+(v) \le b_i,$$
for $i=1,\ldots,m$.
\end{theorem}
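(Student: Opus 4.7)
The plan is to exploit Lemma~\ref{prop:exist-dich} to translate the membership $(s,v)\in\mathcal{W}_i\setminus\{0\}$ into concrete bounds on $\|\Phi(t,s)v\|$ as $t\to+\infty$. Fix any $\gamma_i\in(b_i,a_{i+1})$ and $\gamma_{i-1}$ strictly less than $a_i$ but greater than $b_{i-1}$ (or arbitrary in $(-\infty,a_1)$ when $i=1$); both lie in $\rhoND_\mu(A)$, so by Lemma~\ref{prop:exist-dich} the corresponding shifted systems admit a N$\mu$D with invariant projections $P_{\gamma_i}$ and $P_{\gamma_{i-1}}$, and $\mathcal{U}_{\gamma_i}=\operatorname{Im}P_{\gamma_i}$, $\mathcal{V}_{\gamma_{i-1}}=\operatorname{Ker}P_{\gamma_{i-1}}$. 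In particular, $P_{\gamma_i}(s)v=v$ and $Q_{\gamma_{i-1}}(s)v=v$.

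For the upper bound, I apply the forward N$\mu$D estimate \eqref{eq:dich-phi-gamma} to $v$: for $t\ge s$,
$$\|\Phi_{\gamma_i}(t,s)v\|\le K_{\gamma_i}\left(\frac{\mu(t)}{\mu(s)}\right)^{\alpha_{\gamma_i}}\mu(s)^{\sign(s)\theta_{\gamma_i}}\|v\|,$$
and I convert back via $\Phi(t,s)=(\mu(t)/\mu(s))^{\gamma_i}\Phi_{\gamma_i}(t,s)$. Since $\log\mu(t)\to+\infty$, taking logarithms and dividing by $\log\mu(t)$ forces $\lambda^+(v)\le\gamma_i+\alpha_{\gamma_i}<\gamma_i$. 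Letting $\gamma_i\downarrow b_i$ gives $\lambda^+(v)\le b_i$.

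The lower bound $\lambda^-(v)\ge a_i$ is the delicate step and I expect it to be the main obstacle, since \eqref{eq:dich-phi-gamma} only yields an \emph{upper} bound on $\|\Phi_{\gamma_{i-1}}(t',s')Q_{\gamma_{i-1}}(s')\|$ for $t'\le s'$, whereas I need a \emph{lower} bound on $\|\Phi(t,s)v\|$ for $t\ge s$. The key trick is the cocycle identity combined with the invariance of $Q_{\gamma_{i-1}}$: since $Q_{\gamma_{i-1}}(s)v=v$,
$$v=\Phi_{\gamma_{i-1}}(s,t)\,Q_{\gamma_{i-1}}(t)\,\Phi_{\gamma_{i-1}}(t,s)v,$$
so taking norms and applying the backward estimate with the roles $t'=s$, $s'=t$ gives, for $t\ge s$,
$$\|v\|\le K_{\gamma_{i-1}}\left(\frac{\mu(s)}{\mu(t)}\right)^{\beta_{\gamma_{i-1}}}\mu(t)^{\sign(t)\nu_{\gamma_{i-1}}}\|\Phi_{\gamma_{i-1}}(t,s)v\|.$$
Rearranging, going back to $\Phi(t,s)$, and using $\sign(t)=1$ for $t$ large enough yields
$$\|\Phi(t,s)v\|\ge\frac{\|v\|}{K_{\gamma_{i-1}}}\,\mu(t)^{\gamma_{i-1}+\beta_{\gamma_{i-1}}-\nu_{\gamma_{i-1}}}\,\mu(s)^{-(\gamma_{i-1}+\beta_{\gamma_{i-1}})}.$$
Because $\beta_{\gamma_{i-1}}-\nu_{\gamma_{i-1}}>0$ by the definition of N$\mu$D, dividing $\log$ by $\log\mu(t)$ gives $\lambda^-(v)\ge\gamma_{i-1}+\beta_{\gamma_{i-1}}-\nu_{\gamma_{i-1}}>\gamma_{i-1}$; letting $\gamma_{i-1}\uparrow a_i$ produces $\lambda^-(v)\ge a_i$.

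The remaining middle inequality $\lambda^-(v)\le\lambda^+(v)$ is immediate from the definitions ($\liminf\le\limsup$), so combining the three yields the chain $a_i\le\lambda^-(v)\le\lambda^+(v)\le b_i$. The argument is uniform in the index $i$: the same bounds work at the extremes $i=1$ and $i=m$ (when the corresponding interval is bounded) because one simply selects $\gamma_0$ in $(-\infty,a_1)$ or $\gamma_m$ in $(b_m,+\infty)$, and the limiting procedure $\gamma_{i-1}\uparrow a_i$, $\gamma_i\downarrow b_i$ is unchanged.
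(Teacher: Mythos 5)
Your proposal is correct and follows essentially the same route as the paper: identify $\mathcal U_{\gamma_i}$ and $\mathcal V_{\gamma_{i-1}}$ with $\operatorname{Im}P$ and $\operatorname{Ker}P$ via Lemma~\ref{prop:exist-dich}, use the forward dichotomy estimate for $\lambda^+(v)\le\gamma_i+\alpha_{\gamma_i}$, and reverse the backward estimate (your identity $v=\Phi_{\gamma_{i-1}}(s,t)Q(t)\Phi_{\gamma_{i-1}}(t,s)v$ is exactly what underlies the paper's lower bound $\|\Phi(s,t)w\|\ge K^{-1}(\cdots)\|w\|$) to get $\lambda^-(v)\ge\gamma_{i-1}+\beta-\nu$, then let $\gamma_i\downarrow b_i$ and $\gamma_{i-1}\uparrow a_i$. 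No gaps.
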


\begin{proof}
Let $\gamma_i \in (b_i,a_{i+1})$, $i=1,\ldots,m$.  Since $\gamma_i \notin \SigmaND_\mu(A)$, $x'=\left(A(t)-\gamma_i\frac{\mu'(t)}{\mu(t)}I\right)x$ admits a nonuniform $\mu$-dichotomy. Thus, there is a family of projections $P(t)\in M_n(\R)$, $t\in\R$, such that, for all $t,s\in\R$,
$$P(t)\Phi(t,s)=\Phi(t,s)P(s),$$
and there are constants $K\ge 1$, $\alpha<0$, $\beta>0$ and $\theta,\nu\ge 0$, with $\alpha+\theta<0$ and $\beta-\nu>0$, such that
\begin{equation*}\label{eq:dichotomy1-aa}
\|\Phi_{\gamma_i}(t,s)P(s)\| \le  K \left(\frac{\mu(t)}{\mu(s)}\right)^{\alpha}\mu(s)^{\sign(s)\theta} \mbox{ for } t\ge s,
\end{equation*}
\begin{equation*}\label{eq:dichotomy2-aa}
\|\Phi_{\gamma_i}(t,s)Q(s)\| \le  K\left(\frac{\mu(t)}{\mu(s)}\right)^{\beta}\mu(s)^{\sign(s)\nu} \mbox{ for } t\le s,
\end{equation*}
and thus
\begin{equation*}\label{eq:dichotomy1-bb}
\|\Phi(t,s)P(s)\| \le  K \left(\frac{\mu(t)}{\mu(s)}\right)^{\alpha+\gamma_i}\mu(s)^{\sign(s)\theta} \mbox{ for } t\ge s,
\end{equation*}
\begin{equation*}\label{eq:dichotomy2-bb}
\|\Phi(t,s)Q(s)\| \le  K\left(\frac{\mu(t)}{\mu(s)}\right)^{\beta+\gamma_i}\mu(s)^{\sign(s)\nu} \mbox{ for } t\le s,
\end{equation*}
By Lemma~\ref{prop:exist-dich}, we have $\mbox{\rm Im} P=\mathcal U_{\gamma_i}$ and $\mbox{\rm Ker} P=\mathcal V_{\gamma_{i-1}}$. Therefore, for each $v \in \mathcal U_i$, we have
\[
\begin{split}
\lambda^+(v)
& =\limsup_{t \rightarrow +\infty} \frac{\log \|\Phi(t,s)v\|}{\log \mu(t)}
=\limsup_{t \rightarrow +\infty} \frac{\log \|\Phi(t,s)P(s)v\|}{\log \mu(t)}\\
& \le \limsup_{t \rightarrow +\infty} \frac{\log \left[K \left(\frac{\mu(t)}{\mu(s)}\right)^{\alpha+\gamma_i}\mu(s)^{\sign(s)\theta}\right]}{\log \mu(t)}=\alpha+\gamma_i.
\end{split}
\]
Letting $\gamma_i \to b_i$, we conclude that $\lambda^+(v) \le \alpha+b_i < b_i$.
Additionally, for each $w \in \mathcal V_{i-1}$ and $s \ge t$, we have
\[
\|\Phi(s,t)w\| \ge \frac{1}{K} \left(\frac{\mu(t)}{\mu(s)}\right)^{-\gamma_{i-1}-\beta}\mu(s)^{-\sign(s)\nu}\|w\|.
\]
and thus
\[
\begin{split}
\lambda^-(v)
& =\liminf_{s \rightarrow +\infty} \frac{\log \|\Phi(s,t)w\|}{\log \mu(s)}\\
& \ge \liminf_{s \rightarrow +\infty} \frac{\log \left[\frac{1}{K} \left(\frac{\mu(t)}{\mu(s)}\right)^{-\gamma_{i-1}-\beta}\mu(s)^{-\sign(s)\nu}\|w\|\right]}{\log \mu(s)}
=\beta+\gamma_{i-1}-\nu.
\end{split}
\]
Letting $\gamma_{i-1} \to a_i$ and since $\beta-\nu>0$, we have $\lambda^-(v) \ge \beta+a_i -\nu > a_i$. The result follows.
\end{proof}

\begin{example}
In example~\ref{example1}, consider the particular case of the polynomial grow rates: take $\mu(t)$ to be the growth rate $p(t)$ in~\eqref{eq:poly-dich-R}, $\alpha=-2$, $\beta=2$ and $\theta=\nu=1$. We get the system:
\[
\small
\left[
\begin{array}{c}
u'\\ v'
\end{array}
\right]
=
A(t)
\left[
\begin{array}{c}
u\\ v
\end{array}
\right]
\quad \text{and} \quad
A(t)=
\left[
\begin{array}{cc}
a_1(t) & 0 \\
0 & a_2(t)\\
\end{array}
\right].
\]
and, for $i=1,2$,
\[
a_i(t)=\frac{(-1)^i \, 4+\sign(t)(\cos t +1)}{2(1+|t|)}-\sign(t)\log(1+|t|)\frac{\sin t}{2}.
\]
It follows from Example 9 that, for the system above, the nonuniform polynomial spectrum and the nonuniform exponential spectrum are, respectively,
$$\SigmaND_p(A)=[-3,-1]\cup[1,3] \quad \quad \text{ and } \quad \quad \SigmaND(A)=\R.$$
With respect to the spectrum $\SigmaND_p$, note that, by Theorem~\ref{thm:relation-Lyap-exp}, the interval $[-3,-1]$ indicates the existence of a linear integral manifold  where we have nonuniform polynomial contraction (since the polynomial Lyapunov exponents satisfy $-3 \le \lambda^-(v)\le \lambda^+(v) \le -1$ for $(s,v) \in \mathcal W_1$) and the interval $[1,3]$ indicates the existence of a linear integral manifold  where we have nonuniform polynomial expansion (since the polynomial Lyapunov exponents satisfy $1 \le \lambda^-(v)\le \lambda^+(v) \le 3$ for $(s,v) \in \mathcal W_2$). Recalling that the present system is a particular case of the family of systems in Example~\ref{example1}, we confirm once again that the linear integral manifold  where we have nonuniform polynomial contraction
is the linear manifold $\mathcal W_1=\R \times \sg \{(1,0)\}$ and the linear integral manifold  where we have nonuniform polynomial expansion
is the linear manifold $\mathcal W_2=\R \times \sg \{(0,1)\}$.

Once again we stress that the above information about polynomial contraction and polynomial expansion along the linear manifolds can not be obtained from the nonuniform (exponential) dichotomy spectrum for equation~\eqref{eq:exemplo}, that in this case is the whole $\R$, as already explained in Example~\ref{example1}.

Note that the computations we undertake in this example can immediately be done considering other growth rates in Example~\ref{example1}, illustrating the potential use of the nonuniform $\mu$-dichotomy spectrum to search for nonuniform contraction and expansion with respect to growth rates $\mu$ that are different from exponential ones.
\end{example}

\section{Kinematic similarity}\label{section:Normal formas}

In~\cite{Siegmund-JLMS-2002} the dichotomy spectrum is used to establish the existence of a normal forms for nonautonomous linear systems. A version of this result is obtained in~\cite{Zhang-JFA-2014,Chu-Liao-Siegmund-Xia-Zhu-ANA-2022} using the nonuniform dichotomy spetrum. The objective of this section is to obtain a version of these results for the nonuniform $\mu$-spectrum, obtaining the results mentioned above as a very particular case.

Given $\eps\ge 0$ and a growth rate $\mu$, we say that systems $x'=A(t)x$ and $y'=B(t)y$ are {\it nonuniformly $(\mu,\eps)$-kinematically similar} if there exists a
differentiable matrix function $S:\R\rightarrow {\rm GL}_n(\R)$ and a constant $M_\epsilon>0$ such that, for all $t\in\mathbb R$, we have
\begin{equation}\label{eq:kinematically-similar}
  \|S(t)\|\le M_\varepsilon \mu(t)^{\sign(t)\varepsilon} \quad \text{and} \quad
 \|S(t)^{-1}\|\le M_\varepsilon \mu(t)^{\sign(t)\varepsilon}.
\end{equation}
and the change of variables $x(t)=S(t)y(t)$ transforms $x'=A(t)x$ into $y'=B(t)y$. If $\eps=0$ the systems $x'=A(t)x$ and $y'=B(t)y$ are said {\it nonuniformly kinematically similar}, a notion considered in~\cite{Siegmund-JLMS-2002}.

Each $S:\R\rightarrow {\rm GL}_n(\R)$ satisfying \eqref{eq:kinematically-similar} for some $\eps\ge0$ is called a {\it nonuniform Lyapunov matrix function with respect to $\mu$} and the change of variables $x(t)=S(t)y(t)$ is said a {\it nonuniform Lyapunov transformation with respect to $\mu$}.

The previous definition is a generalization of the definitions of nonuniformly kinematically similar, considered in \cite{Siegmund-JLMS-2002,Zhang-JFA-2014} and corresponding to the special case $\mu(t)=\e^t$.

In the next result we obtain a characterization of the normal forms of nonautonomous linear systems using the nonuniform
$\mu$-dichotomy spectrum. This result contains Theorem 1.3 in~\cite{Zhang-JFA-2014} as the very particular case $\mu(t)=\e^t$, corresponding to the nonuniform dichotomy spectrum.

\begin{theorem}\label{thm:normal-form}
Assume that $A(t)$ is differentiable and that the evolution operator of system \eqref{eq:sist-linear} has nonuniformly bounded growth with respect to $(\mu,\eps)$. Assume also that system~\eqref{eq:sist-linear} has a nonuniform $\mu$-dichotomy with parameters $\alpha$, $\beta$, $\theta$ and $\nu$ such that
\begin{equation}\label{eq:3max-min}
3\max\{\theta,\nu\} - \min\{-\alpha-\theta,\beta-\nu\} \le 0.
\end{equation}
Let the nonuniform $\mu$-dichotomy spectrum be
$$\SigmaND_\mu(A)=[a_1,b_1]\cup \ldots \cup [a_m,b_m],$$
with $[a_i,b_i]\cap [a_j,b_j]=\emptyset$ for any $i,j \in \{1,\ldots,m\}$ such that $i\ne j$. Then system \eqref{eq:sist-linear} is $(\mu,\eps)$-nonuniformly kinematically similar to system $y'=B(t)y$, where $$B(t)=\text{diag}(B_0(t),B_1(t),\cdots,B_{m}(t),B_{m+1}(t)),$$ the matrices
$B_i(t):\,\R\rightarrow \R^{n_i\times n_i}$, with $n_i=\rank\,\mathcal W_i$, are
differentiable and
\[
\SigmaND_\mu(B_i)=
\begin{cases}
\emptyset & \quad \text{for} \quad i\in\{0,m+1\}\\
[a_i,b_i] & \quad \text{for} \quad i\in\{1,\dots,m\}
\end{cases}.
\]
\end{theorem}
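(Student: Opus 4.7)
The plan is to exploit the Whitney sum decomposition $\R\times\R^n=\mathcal W_0\oplus\cdots\oplus\mathcal W_{m+1}$ furnished by Theorem~\ref{thm:spectrum} and to construct a nonuniform Lyapunov matrix $S:\R\to\mathrm{GL}_n(\R)$ whose columns, arranged by block, span the spectral fibers $\mathcal W_i(t)$. Since each $\mathcal W_i$ is $\Phi(t,s)$-invariant, the transformed evolution operator $\Psi(t,s)=S(t)^{-1}\Phi(t,s)S(s)$ is automatically block diagonal, whence
\[
B(t)=S(t)^{-1}A(t)S(t)-S(t)^{-1}S'(t)=\mathrm{diag}(B_0(t),\ldots,B_{m+1}(t))
\]
with $B_i$ of order $n_i=\rank\mathcal W_i$. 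All structural claims of the theorem then follow once $S(t)$ is built satisfying \eqref{eq:kinematically-similar}.

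Concretely, I would pick a basis of $\mathcal W_i(0)$ for each $i$, transport it by $\Phi(t,0)$ to obtain a differentiable basis of $\mathcal W_i(t)$, and concatenate these by block into a fundamental matrix $X(t)$. Since $X(t)$ itself need not meet the bounds in \eqref{eq:kinematically-similar}, I would correct it by a block-diagonal matrix $R(t)=\mathrm{diag}(R_0(t),\ldots,R_{m+1}(t))$ (for instance through a Gram--Schmidt/QR correction within each block) and set $S(t)=X(t)R(t)$. The block-diagonal form of $R(t)$ guarantees that $S(t)$ still maps the $i$-th coordinate block onto $\mathcal W_i(t)$, so the block-diagonal form of $B(t)$ is preserved.

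The technical heart of the proof, and its main obstacle, is verifying the $(\mu,\eps)$-Lyapunov bounds on $S(t)$ and $S(t)^{-1}$. This reduces to estimating the invariant projections $\pi_i(t)$ onto $\mathcal W_i(t)$ along $\bigoplus_{j\ne i}\mathcal W_j(t)$: for $\gamma_{i-1}\in(b_{i-1},a_i)$ and $\gamma_i\in(b_i,a_{i+1})$ taken in $\rhoND_\mu(A)$, Lemma~\ref{prop:exist-dich} supplies invariant projections $P_{\gamma_i}(t)$ with image $\mathcal U_{\gamma_i}$ and kernel $\mathcal V_{\gamma_i}$, and $\pi_i(t)$ is obtained as a telescoping difference of these. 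Combining the dichotomy estimates with the nonuniformly bounded growth of $\Phi$ yields $\|\pi_i(t)\|\le K'\mu(|t|)^{\sign(t)\eps'}$ for some $\eps'$ depending on $\theta,\nu,\eps$. Hypothesis \eqref{eq:3max-min} is precisely what is needed to keep $\eps'$ controlled: it absorbs the accumulation of $\theta$ and $\nu$ when chaining a forward estimate for $\Phi_{\gamma_i}$ with a backward one for $\Phi_{\gamma_{i-1}}$, and it ensures that, when a dichotomy of $B$ is pulled back to $A$ via $\|\Phi P\|\le\|S\|\|\Psi Q\|\|S^{-1}\|$, the defining strict inequalities $\alpha+\theta<0$ and $\beta-\nu>0$ survive.

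Once $S(t)$ is in hand, the spectra of the blocks fall out quickly. For $i\in\{0,m+1\}$ the trivial blocks have empty spectrum, while any nontrivial outer block admits a N$\mu$D for every $\gamma\in\R$ because $\mathcal W_0\subseteq\mathcal U_\gamma$ (resp.\ $\mathcal W_{m+1}\subseteq\mathcal V_\gamma$) can be arranged for arbitrarily small (resp.\ large) $\gamma$; hence $\SigmaND_\mu(B_i)=\emptyset$. For $i\in\{1,\ldots,m\}$ and $\gamma\in\rhoND_\mu(A)\setminus[a_i,b_i]$, the restriction of the $\Phi_\gamma$-dichotomy to the invariant fiber $\mathcal W_i$ yields a N$\mu$D for $y_i'=B_i(t)y_i$ at $\gamma$, giving $\SigmaND_\mu(B_i)\subseteq[a_i,b_i]$. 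Conversely, if some $\gamma\in[a_i,b_i]$ belonged to $\rhoND_\mu(B_i)$, then since $\gamma\notin[a_j,b_j]\supseteq\SigmaND_\mu(B_j)$ for every $j\ne i$, each block would admit a N$\mu$D at $\gamma$; assembling them would produce a N$\mu$D for $B(t)-\gamma(\mu'/\mu)I$, and hence, by kinematic similarity, for $A(t)-\gamma(\mu'/\mu)I$, contradicting $\gamma\in\SigmaND_\mu(A)$.
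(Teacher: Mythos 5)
Your outline is correct and reaches the same conclusion, but it is organized differently from the paper. The paper does not build the full change of variables in one shot: it peels off one spectral block at a time, applying at each stage the Daleckii--Krein/Siegmund square-root construction (Lemma~\ref{lemma-properties-projection} and Lemma~\ref{lemma:exist-nonunif-Lyap-function-wrt-mu}) to the dichotomy projection $\widetilde P_i$ of the shifted system at a single $\gamma_i$, producing a two-block splitting whose nonuniformity is bounded by $\mu(t)^{\sign(t)\max\{\theta,\nu\}}$, and then invoking Lemma~\ref{lemma:kinematically-similar-implies-dich-spectrum} to see that the spectrum survives the transformation; the final block structure and the identification $n_i=\rank\mathcal W_i$ come out of the iteration. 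Your global construction --- transport bases of the fibers $\mathcal W_i(0)$, orthonormalize within each block, and control $\|S(t)^{-1}\|$ through the spectral projections $\pi_i(t)=P_{\gamma_i}(t)-P_{\gamma_{i-1}}(t)$ --- is a legitimate alternative and is arguably more transparent about where the block-diagonal form comes from; it correctly identifies that a within-block QR does not fix the angles between blocks and that these are exactly what the $\pi_i$ bounds (obtained by setting $t=s$ in the dichotomy estimates) control. What the iterative route buys is that at each stage only one projection pair enters the estimate, so the bookkeeping of constants is local; what your route buys is a single explicit $S(t)$ and a cleaner derivation of the block spectra. Two points in your sketch deserve to be made explicit if you write it out: (i) differentiability of the QR factor of a differentiable matrix family (the paper instead cites differentiability of the positive square root, via Coppel); and (ii) the nonuniformity exponent $\eps'$ of your $S(t)$ involves the constants $\theta_j,\nu_j$ of the dichotomies at \emph{all} the intermediate points $\gamma_j$, whereas hypothesis~\eqref{eq:3max-min} nominally constrains only the parameters of the one assumed dichotomy --- this is a gap you share with the paper's own proof (which sets $\eps_0=\max\{\theta,\nu\}$ at each stage), so it is not a defect of your approach specifically, but it is the place where both arguments are thinnest. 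Your block-spectrum computation, including the assembly argument showing $[a_i,b_i]\subseteq\SigmaND_\mu(B_i)$, matches the paper's and is in fact spelled out more carefully than the paper's terse conclusion.
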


\begin{proof}
To prove the result we begin by establishing some lemmas.

The proof of the next lemma is obtained using arguments borrowed from the proof of Lemma 3.1 in~\cite{Zhang-JFA-2014}, that in turn is based on the arguments in the proof of Lemma 2.1 in page 158 of Daleckii and Krein~\cite{Daleckii-Krein-livro-1974}. We present a full proof for the sake of completeness.

\begin{lemma}\label{lemma-lyap-matrix-S}
Let $S(t)$ be a nonuniform Lyapunov matrix function with respect to $\mu$. Then, the following statements are equivalent.
\begin{enumerate}[\lb=$\alph*)$,\lm=6mm,\is=2mm]
\item \label{lemma-lyap-matrix-S-1} For some $\eps\ge 0$, systems $x'=A(t)x$ and $y'=B(t)y$ are nonuniformly $(\mu,\eps)$-kinematically similar via the transformation $x=S(t)y$.
\item \label{lemma-lyap-matrix-S-2} We have $\Phi_A(t,s)S(s)=S(t)\Phi_B(t,s)$ for all $t,s\in \R$, where $\Phi_A$ and $\Phi_B$ are the evolution operators of systems
 $x'=A(t)x$ and $y'=B(t)y$, respectively.
\item \label{lemma-lyap-matrix-S-3} $S(t)$ is a solution of $S'=A(t)S-SB(t)$.
\end{enumerate}
\end{lemma}

\begin{proof}
Assume that systems $y'=A(t)y$ and $y'=B(t)y$ are nonuniformly $(\mu,\eps)$-kinematically similar via $x=S(t)y$, where $S:\R\rightarrow {\rm GL}_n(\R)$ is a differentiable matrix function such that~\eqref{eq:kinematically-similar} holds. Let $s_0\ge 0$, $y_0 \in \R^n$ and $y(t)$ be a solution of $y'=B(t)y$ with $y(s)=y_0$. Let $x(t)$ denote the solution of $x'=A(t)x$ with $x(s)=S(s)y_0$. Denoting by $\Phi_A$ and $\Phi_B$ the evolution operators of systems $x'=A(t)x$ and $y'=B(t)y$, respectively, we have
$$\Phi_A(t,s)S(s)y_0=\Phi_A(t,s)x(s)=x(t)=S(t)y(t)=S(t)\Phi_B(t,s)y_0.$$
 Since $y_0$ is arbitrary, we conclude that $\Phi_A(t,s)S(s)=S(t)\Phi_B(t,s)$, for all $s,t \in\R$. We conclude that~\ref{lemma-lyap-matrix-S-1} implies~\ref{lemma-lyap-matrix-S-2}.

Assume now that $\Phi_A(t,s)S(s)=S(t)\Phi_B(t,s)$ for all $t,s\in\R$. We have in particular $\Phi_A(t,0)S(0)\Phi_B(t,0)^{-1}=S(t)$. Since $\Phi_A(t,0)S(0)$ and $\Phi_B(t,0)$ are fundamental matrices, respectively, of systems $x'=A(t)x$ and $y'=B(t)y$, we conclude that $S(t)$ is differentiable and thus
\[
\begin{split}
S'(t) & =\frac{d}{dt} \Phi_A(t,0)S(0)\Phi_B(t,0)^{-1} \\
& = A(t)\Phi_A(t,0)S(0)\Phi_B(t,0)^{-1}-\Phi_A(t,0)S(0)\Phi_B(t,0)^{-1}B(t)\Phi_B(t,0)\Phi_B(t,0)^{-1}\\
& = A(t)\Phi_A(t,0)\Phi_A(t,0)^{-1}S(t)-S(t)\Phi_B(t,0)\Phi_B(t,0)^{-1}B(t)\\
& = A(t)S(t)-S(t)B(t)
\end{split}
\]
We conclude that~\ref{lemma-lyap-matrix-S-2} implies~\ref{lemma-lyap-matrix-S-3}.

Assume now that $S'(t)=A(t)S(t)-S(t)B(t)$ and let $S(t_0)=C$, where $C$ is an invertible matrix. Defining
 $Z(t)=\Phi_A(t,t_0)C\Phi_B(t,t_0)^{-1}$ we get
 \[
 \begin{split}
 \frac{d}{dt} Z(t)
 & = \frac{d}{dt} \left( \Phi_A(t,t_0)C\Phi_B(t,t_0)^{-1}\right)\\
 & = A(t)\Phi_A(t,t_0)C\Phi_B(t,t_0)^{-1}-\Phi_A(t,t_0)C\Phi_B(t,t_0)^{-1}B(t)\\
& = A(t)Z(t)-Z(t)B(t)
\end{split}
 \]
 and $Z(t_0)=C$. We conclude that $S(t)=Z(t)=\Phi_A(t,t_0)C\Phi_B(t,t_0)^{-1}$. Let now $x(t)=S(t)y(t)$, where $y$ is the solution of $y'=B(t)y$ with $y(t_0)=y_0$. We have
 $$x'=(S(t)y)'=S'(t)y+S(t)y'=A(t)S(t)y-S(t)B(t)y+S(t)B(t)y=A(t)x$$
 and $x(t_0)=S(t_0)y(t_0)=S(t_0)y_0$. Since $S(t)$ is a nonuniform Lyapunov matrix function with respect to $\mu$, we conclude that $y'=A(t)y$ and $y'=B(t)y$ are nonuniformly $(\mu,\eps)$-kinematically similar via a transformation $x=S(t)y$ and thus~\ref{lemma-lyap-matrix-S-3} implies~\ref{lemma-lyap-matrix-S-1}.
\end{proof}

\begin{lemma}\label{lemma:kinematically-similar-implies-dich-spectrum}
 Let $y'=A(t)y$ and $y'=B(t)y$ be nonuniformly $(\mu,\eps)$-kinematically similar and
 \begin{equation}\label{eq:3eps-min}
 \eps\le\frac13 \min\{-\alpha-\theta,\beta-\nu\}.
 \end{equation}
 Then $\SigmaND_\mu(A)=\SigmaND_\mu(B)$.
 \end{lemma}

\begin{proof}
Assume that $x'=A(t)x$ and $y'=B(t)y$ are nonuniformly $(\mu,\eps)$-kinematically similar with $\eps$ satisfying~\eqref{eq:3eps-min}. Let $S:\R\rightarrow {\rm GL}_n(\R)$ be a differentiable matrix function such that $y(t)=S(t)x(t)$ transforms $x'=A(t)x$ into $y'=B(t)y$ and satisfies $\|S(t)\|\le M_\varepsilon \mu(t)^{\sign(t)\varepsilon}$ and $\|S(t)^{-1}\|\le M_\varepsilon \mu(t)^{\sign(t)\varepsilon}$, for all $t\in\mathbb \R$ with $\eps$ satisfying~\eqref{eq:3eps-min}. Define $Y(t)=S(t)X(t)$, where $X(t)$ is a fundamental matrix of $x'=A(t)x$. Assume that $x'=A(t)x$ satisfies~\eqref{eq:dichotomy1-fund-matrix}--\eqref{eq:dichotomy2-fund-matrix}. The proof of Lemma~\ref{lemma-lyap-matrix-S} showed immediately that $Y(t)$ is a fundamental matrix of $y'=B(t)y$. Moreover, for $t \ge 0$ and $t \ge s$, we get
\[
\begin{split}
\|Y(t)\widetilde P Y(s)^{-1}\|
& \le \|S(t)\|\|X(t)\widetilde P X(s)^{-1}\|\|S(s)^{-1}\|\\
& \le M_\varepsilon \mu(t)^{\sign(t)\varepsilon} K \left(\frac{\mu(t)}{\mu(s)}\right)^\alpha \mu(s)^{\sign(s)\theta} M_\varepsilon \mu(s)^{\sign(s)\varepsilon}\\
& \le K(M_\varepsilon)^2 \left(\frac{\mu(t)}{\mu(s)}\right)^{\alpha+\varepsilon} \mu(s)^{\sign(s)(\theta+2\varepsilon)}\\
&= K(M_\varepsilon)^2 \left(\frac{\mu(t)}{\mu(s)}\right)^{\widetilde \alpha} \mu(s)^{\sign(s)\widetilde \theta},
\end{split}
\]
where $\widetilde \alpha=\alpha+\varepsilon$ and $\widetilde \theta=\theta+2\varepsilon$ (since $\mu$ is increasing and $\mu(0)=1$, we have $\mu(s)\le\mu(s)^{\sign(s)}$ for all $s \in \R$). Assuming now that $t < 0$ and $t \ge s$, we have
\[
\begin{split}
\|Y(t)\widetilde P Y(s)^{-1}\|
& \le K(M_\varepsilon)^2 \left(\frac{\mu(t)}{\mu(s)}\right)^{\alpha} \mu(s)^{\sign(s)(\theta+2\varepsilon)}\\
& = K(M_\varepsilon)^2 \left(\frac{\mu(t)}{\mu(s)}\right)^{\widetilde \alpha} \mu(s)^{\sign(s)\widetilde \theta}.
\end{split}
\]
According to~\eqref{eq:3eps-min}, $\widetilde \alpha+\widetilde \theta<0$.

Similarly, for $t<0$ and $s \ge t$ we have
\[
\begin{split}
\|Y(t)\widetilde Q Y(s)^{-1}\|
& \le \|S(t)\|\|X(t)\widetilde Q X(s)^{-1}\|\|S(s)^{-1}\|\\
& \le M_\epsilon \mu(t)^{\sign(t)\varepsilon} K \left(\frac{\mu(t)}{\mu(s)}\right)^\beta \mu(s)^{\sign(s)\nu} M_\epsilon \mu(s)^{\sign(s)\varepsilon}\\
& \le K(M_\epsilon)^2 \left(\frac{\mu(t)}{\mu(s)}\right)^{\beta-\eps} \mu(s)^{\sign(s)(\nu+2\varepsilon)}\\
& \le K(M_\epsilon)^2 \left(\frac{\mu(t)}{\mu(s)}\right)^{\widetilde \beta} \mu(s)^{\widetilde \nu},
\end{split}
\]
where $\widetilde \beta=\beta-\eps$ and $\widetilde \nu=\nu+2\epsilon$.
Also, for $t\ge 0$ and $s \ge t$ we obtain
\[
\begin{split}
\|Y(t)\widetilde Q Y(s)^{-1}\|
& \le K(M_\epsilon)^2 \left(\frac{\mu(t)}{\mu(s)}\right)^{\beta+\eps} \mu(s)^{\sign(s)(\nu+2\epsilon)}\\
& \le K(M_\epsilon)^2 \left(\frac{\mu(t)}{\mu(s)}\right)^{\widetilde \beta} \mu(s)^{\widetilde \nu}.
\end{split}
\]
By~\eqref{eq:3eps-min} we have $\widetilde \beta-\widetilde \nu>0$. We conclude that the existence of a N$\mu$D for $x'=A(t)x$ implies the existence of a N$\mu$D for $x'=B(t)x$. Reciprocally, since $H:\R\rightarrow {\rm GL}_n(\R)$ given by $H(t)=S^{-1}(t)$ is a differentiable matrix function such that $x(t)=H(t)y(t)$ transforms $y'=B(t)y$ into $x'=A(t)x$ and satisfies $\|H(t)\|\le M_\varepsilon \mu(t)^{\sign(t) \varepsilon}$ and $\|H(t)^{-1}\|\le M_\varepsilon \mu(t)^{\sign(t) \varepsilon}$, we conclude that there is a N$\mu$D for $x'=A(t)x$ if and only if there is a N$\mu$D for $x'=B(t)x$. Thus $\SigmaND_\mu(A)=\SigmaND_\mu(B)$.
\end{proof}

The proof of next lemma is contained in the proof of Lemma A.5 of \cite{Siegmund-JLMS-2002} and was already used in the present form in~\cite{Zhang-JFA-2014}.

\begin{lemma}[Lemma~3.3 of~\cite{Zhang-JFA-2014}]\label{lemma-properties-projection}
Let $P_0\in M_n(\R)$ be a symmetric projection and
$\X:\R \to {\rm GL}_n(\R)$.Then:
\begin{enumerate}[\lb=$\alph*)$,\lm=6mm,\is=2mm]
\item \label{lemma-properties-projection-1} For every $t \in\R$, the matrices $Q(t) \in M_n(\R)$ given by
\begin{equation}\label{eq:Q(t)-normal-form}
Q(t)=P_0X(t)^TX(t)P_0+(I-P_0)X(t)^TX(t)(I-P_0)
\end{equation}
are positively definite and symmetric. Moreover, there exists a unique positively definite and
symmetric matrix $R(t)$ such that $R(t)^2=Q(t)$ and
$P_0R(t)=R(t)P_0$ for every $t \in \R$;
\item \label{lemma-properties-projection-2} The matrix $S(t)=X(t)R(t)^{-1}$ is invertible, $S(t)P_0S(t)^{-1}=X(t)P_0X(t)^{-1}$ and we have, for every $t \in \R $,
\[
\|S(t)\|\le \sqrt{2},\quad
\|S(t)^{-1}\|\le\sqrt{\|X(t)P_0X(t)^{-1}\|^2+\|X(t)(I-P_0)X(t)^{-1}\|^2}.
\]
\end{enumerate}
\end{lemma}

\begin{lemma}\label{lemma:exist-nonunif-Lyap-function-wrt-mu}
Assume that system \eqref{eq:sist-linear} has a N$\mu$D with an invariant projection
$P:\,\R\rightarrow M_n(\R)$ satisfying $P(t)\ne 0,I$. Then there
exists a differentiable nonuniform Lyapunov matrix function with respect to $\mu$, $S: \, \R\rightarrow{\rm
GL}_n(\R)$, such that, for all $t\in \R$, we have $\|S(t)\|, \, \|S(t)^{-1}\| \le K \mu(t)^{\sign(t)\max\{\theta,\nu\}}$, where $K$, $\theta$ and $\nu$ are the constants in~\eqref{eq:dichotomy1}--\eqref{eq:dichotomy2}, and
\[
S(t)^{-1}P(t)S(t)=
\left[\begin{array}{cc} I & 0\\ 0 &
0\end{array}\right].
\]
\end{lemma}

\begin{proof}
All the projections $P(t)$ have the same rank since $P(t)\Phi(t,s)=\Phi(t,s)P(s)$, for $t,s\in \R$,
and this property implies that, for all $t,s\in \R$, the matrices $P(t)$ and $P(s)$ are similar.
By Remark~\ref{remark:form-of-projections}, for
any given $s\in \R$ there exists a $T(s)\in{GL}_n(\R)$ such that, for all $s\in \R$,
\begin{equation}\label{eq:hipotese-TPT-1=P0}
T(s)P(s)T(s)^{-1}=
\left[\begin{array}{cc} I & 0\\ 0 &
0\end{array}\right]=:P_0,
\end{equation}
where $I$ is the identity of dimension $\dim{\rm Im}P$. Let $X(t)=\Phi(t,s)T(s)^{-1}$.
By Lemma~\ref{lemma-properties-projection},  for all $t\in \R$, there is a unique positively definite and
symmetric matrix $R(t)$ satisfying $P_0R(t)=R(t)P_0$ and $R(t)^2=Q(t)$ with $Q(t)$ given by~\eqref{eq:Q(t)-normal-form}. Letting
$S(t)=\Phi(t,s)T(s)^{-1}R(t)^{-1}$, using the
invariance of $P(t)$ and~\eqref{eq:hipotese-TPT-1=P0} we have
\[
\begin{split}
S(t)^{-1}P(t)S(t)
& = R(t)T(s)\Phi(t,s)^{-1}P(t)\Phi(t,s)T(s)^{-1}R(t)^{-1}\\
& =R(t)T(s)P(s)T(s)^{-1}R(t)^{-1}\\
& =R(t)P_0R(t)^{-1}=P_0.
\end{split}
\]
Finally, by Lemma~\ref{lemma-properties-projection}, we have
$\|S(t)\|\le \sqrt{2}$ and, using the fact that system \eqref{eq:sist-linear} has a N$\mu$D with an invariant projection
$P:\,\R\rightarrow M_n(\R)$ satisfying $P(t)\ne 0,I$ and estimates~\eqref{eq:dichotomy1-fund-matrix}--\eqref{eq:dichotomy2-fund-matrix}, we conclude that
\[
\begin{split}
\|S(t)^{-1}\|
& \le \sqrt{\|X(t)P_0X(t)^{-1}\|^2+\|X(t)(I-P_0)X(t)^{-1}\|^2}\\
& \le \sqrt{K^2 \mu(t)^{2\sign(t)\theta}+K^2\mu(t)^{2\sign(t)\nu}} \le \sqrt{2}K \mu(t)^{\sign(t)\max\{\theta,\nu\}}.
\end{split}
\]
Finally, note that the differentiability of $S(t)$ follows from the following facts: $R(t)$ is the positive square root of a differentiable, positive definite and symmetric matrix valued function; positive square roots of differentiable, positive definite and symmetric matrix valued functions are differentiable and the inverse of an invertible differentiable matrix valued function is still differentiable (see Coppel~\cite{Coppel-LNM-1978}, Lemma 1, page 39).

Thus $S(t)$ is a nonuniform Lyapunov matrix function with respect to $\mu$ and we establish the lemma.
\end{proof}

Next we will use the auxiliary lemmas to establish our result. According to Theorem~\ref{thm:spectrum}, we have $\mathcal W_0\oplus\mathcal W_1\oplus\ldots\oplus\mathcal W_m\oplus \mathcal W_{m+1}=\R\times \R^n$ with $\rank\mathcal W_i\ge 1$ for
$i=1,\ldots,m$.

Choose $\gamma_0\in\,(-\infty,a_1)$, $\gamma_m\in\,(b_m,+\infty)$ and
$\gamma_i\in\,(b_i,a_{i+1})$ for $i=1,\ldots,m-1$. According to Theorems~\ref{thm:spectrum} and~\ref{thm:bounded} we have $\mathcal W_0=\mathcal U_{\gamma_0}$, $\mathcal W_{m+1}=\mathcal V_{\gamma_m}$ and $\mathcal W_i=\mathcal U_{\gamma_i} \cap V_{\gamma_{i-1}}$ for $i=1,\ldots,m$.

For $\gamma_0 \in (-\infty,a_1) \subset \rhoND_\mu(A)$, the system
\begin{equation}\label{eq:At-gamma0}
x'=\left(A(t)-\gamma \frac{\mu'(t)}{\mu(t)} I\right) x,
\end{equation}
with $\gamma=\gamma_0$, admits a N$\mu$D with some invariant projection $\widetilde P_0$: if $\Phi_{\gamma_0}(t,s)$ denotes the evolution operator of~\eqref{eq:At-gamma0} with $\gamma=\gamma_0$, then there are $K_0\ge 1$, $\alpha_0<0<\beta_0$ and $\theta_0,\nu_0\ge 0$ satisfying $\alpha_0+\theta_0<0$ and $\beta_0-\nu_0>0$ such that~\eqref{eq:dichotomy1}--\eqref{eq:dichotomy2} hold with $\Phi=\Phi_{\gamma_0}$. The argument used to obtain~\eqref{eq:Phi-gamma} shows that the evolution operator of~\eqref{eq:At-gamma0}, $\Phi_\gamma(t,s)$, is given by $\Phi_{\gamma}(t,s)=\left(\frac{\mu(t)}{\mu(s)}\right)^{\gamma_0-\gamma}\Phi_{\gamma_0}(t,s)$.
Thus, for $t \ge s$ we have
\begin{equation}\label{eq:dichotomy1-b}
\begin{split}
\|\Phi_\gamma(t,s)\widetilde P_0(s)\|
& =\left(\frac{\mu(t)}{\mu(s)}\right)^{\gamma_0-\gamma}\left\|\Phi_{\gamma_0}(t,s)\widetilde P_0(s)\right\| \\
& \le  K_0 \left(\frac{\mu(t)}{\mu(s)}\right)^{\alpha_0+\gamma_0-\gamma}\mu(s)^{\sign(s)\theta_0}
\end{split}
\end{equation}
and, similarly, for $s \ge t \ge 0$ we have
\begin{equation}\label{eq:dichotomy2-b}
\begin{split}
\|\Phi_\gamma(t,s)\widetilde Q_0(s)\|
& =\left(\frac{\mu(t)}{\mu(s)}\right)^{\gamma_0-\gamma}\left\|\Phi_{\gamma_0}(t,s)\widetilde  Q_0(s)\right\| \\
& \le  K_0\left(\frac{\mu(t)}{\mu(s)}\right)^{\beta_0+\gamma_0-\gamma}\mu(s)^{\sign(s)\nu_0},
\end{split}
\end{equation}
where $\widetilde  Q_0(s)=I-\widetilde  P_0(s)$. Additionally,
\[
\begin{split}
\widetilde P_0(t)\Phi_\gamma(t,s)
& =\left(\frac{\mu(t)}{\mu(s)}\right)^{\gamma_0-\gamma}\widetilde P_0(t)\Phi_{\gamma_0}(t,s)
=\left(\frac{\mu(t)}{\mu(s)}\right)^{\gamma_0-\gamma}\Phi_{\gamma_0}(t,s)\widetilde P_0(s)\\
& =\Phi_\gamma(t,s)\widetilde P_0(s)
\end{split}
\]
and $\widetilde P_0$~is an invariant projection for $\Phi_\gamma$. In particular, $\widetilde P_0$~is an invariant projection for $\Phi$.
By Lemma~\ref{lemma:exist-nonunif-Lyap-function-wrt-mu}, there
exists a differentiable nonuniform Lyapunov matrix function with respect to $\mu$, $S_0:\,\R\rightarrow{\rm GL}_n(\R)$,
such that $\|S(t)\|,\|S(t)^{-1}\|\le M\mu(t)^{\sign(t)\max\{ | \theta |, | \nu |\}}$ and
\[
S_0(t)^{-1}\widetilde P_0(t)S_0(t)=
\left[\begin{array}{cc}
I & 0\\
0 & 0
\end{array}\right]:= P_0,
\]
where $I$ denotes the identity of order $\dim \imagem \widetilde P_0(t)$. For $t\in\R$, define
\[
B(t):=S_0(t)^{-1}(A(t)S_0(t)-S_0'(t))  \quad \Leftrightarrow \quad S_0'(t)=A(t)S_0(t)-S_0(t)B(t).
\]
By~\ref{lemma-lyap-matrix-S-3} in Lemma~\ref{lemma-lyap-matrix-S} and the identity above we conclude that system \eqref{eq:sist-linear} is nonuniformly {$(\mu,\eps_0)$-kinematically} similar to
system $y'=B(t)y$ via the transformation $x(t)=S_0(t)y(t)$, where $\eps_0=\max\{\theta,\nu\}$. Moreover, according to~\ref{lemma-lyap-matrix-S-2} in Lemma~\ref{lemma-lyap-matrix-S}, $\Phi_B(t,s)=S_0(t)^{-1}\Phi(t,s)S_0(s)$ is the evolution operator of $y'=B(t)y$.

Define $R(t)=S_0(t)^{-1}\Phi(t,s)T^{-1}(s)$, where $T(s)\in GL_n(\R)$ was defined in the proof of Lemma~\ref{lemma:exist-nonunif-Lyap-function-wrt-mu} (see~\eqref{eq:hipotese-TPT-1=P0}).
Since $P_0R(t)=R(t)P_0$, we have
$$R(t)^{-1}P_0
=P_0R(t)^{-1} \quad \text{ and } \quad R'(t)P_0=P_0R'(t).$$

 Using the
fact that $S_0(t)^{-1}\Phi(t,s)S_0(s)$ is a fundamental matrix
solution of $y'=B(t)y$, we have $R(t)T(s)S_0(s)=\Phi_B(t,s)$ and thus
$$R'(t)T(s)S_0(s)=B(t)\Phi_B(t,s)=B(t)S_0(t)^{-1}\Phi(t,s)S_0(s).$$ Therefore
$$R'(t)R(t)^{-1}=B(t)S_0(t)^{-1}\Phi(t,s)\Phi(t,s)^{-1}S_0(t)=B(t)$$
and also, since $P_0$ commutes with $R'(t)$ and $R(t)^{-1}$,
\begin{equation}\label{eq:P0Bt=BTP0}
P_0B(t)=P_0R'(t)R(t)^{-1}=R'(t)R(t)^{-1}P_0=B(t)P_0.
\end{equation}
For $t \in \R$ write
\[
B(t)
=\left[
\begin{array}{cc}
B_0(t) & C_0(t)\\
D_0(t) & E_0(t)
\end{array}
\right],
\]
where $B_0:\R\rightarrow M_{n_0}(\R)$,
$D_0:\R\rightarrow M_{n-n_0}(\R)$,
$C_0:\R\rightarrow \R^{n_0\times (n-n_0)}$,
$E_0:\R\rightarrow \R^{(n-n_0)\times n_0}$ and $n_0=\dim{\rm Im}P_0$ (and thus $\dim{\rm Ker}P_0=n-n_0$).
The identity \eqref{eq:P0Bt=BTP0} implies that $C_0(t)=D_0(t)=0$ for all $t \in \R$.
We have established that system \eqref{eq:sist-linear} is nonuniformly $(\mu,\eps_0)$-kinematically similar to $y'=D_0 y$, where
\begin{equation}\label{eq:reduced-0}
D_0
=\left[
\begin{array}{cc}
B_0(t) & 0\\
0 & E_0(t)
\end{array}
\right],
\end{equation}
$B_0:\R\rightarrow M_{n_0}(\R)$ and $E_0:\R\rightarrow M_{n-n_0}(\R)$ are differentiable, where $n_0=\dim{\rm Im}\widetilde P_0$ and $n_1=\dim{\rm Ker}\widetilde P_0$.
Taking into account~\eqref{eq:3max-min}, we have $\eps_0\le \frac13\max\{-\alpha-\theta,\beta-\nu\}$ and it follows from Lemma~\ref{lemma:kinematically-similar-implies-dich-spectrum} that systems \eqref{eq:sist-linear} and \eqref{eq:reduced-0} have the
same nonuniform $\mu$-dichotomy spectrum. Moreover, $P_0$ is an invariant projection for system \eqref{eq:reduced-0}. So we get from \eqref{eq:dichotomy1-b}--\eqref{eq:dichotomy2-b} that $\SigmaND_\mu(B_0)\subset (-\infty, a_1)$ and $\SigmaND_\mu(E_0)=\SigmaND_\mu(A)$. This implies that $\SigmaND_\mu(B_0)=\emptyset$. Note that, for each $\lambda>a_1$, equation~\eqref{eq:dichotomy1-b} implies that
$$z'=\left(B_0(t)-\lambda \frac{\mu(t)}{\mu(s)}I_{n_0}\right)z$$
admits a N$\mu$D with projection $P=I_{n_0}$.

For $\gamma_1 \in \ (b_1,a_2) \ \subset \rhoND_\mu(E_0)=\rhoND_\mu(A)$, system~\eqref{eq:At-gamma0}
with $\gamma=\gamma_1$ admits a N$\mu$D with some invariant projection $\widetilde P_1$: if $\Phi_{\gamma_1}(t,s)$ denotes the evolution operator of~\eqref{eq:At-gamma0} with $\gamma=\gamma_1$, then there are $K_1\ge 1$, $\alpha_1<0<\beta_1$ and $\theta_1,\nu_1\ge 0$ satisfying $\alpha_1+\theta_1<0$ and $\beta_1-\nu_1>0$ such that~\eqref{eq:dichotomy1}--\eqref{eq:dichotomy2} hold with $\Phi=\Phi_{\gamma_1}$.

Again, for $t \ge s$ we have
\begin{equation}\label{eq:dichotomy1-c}
\begin{split}
\|\Phi_\gamma(t,s)\widetilde P_1(s)\|
& =\left(\frac{\mu(t)}{\mu(s)}\right)^{\gamma_1-\gamma}\left\|\Phi_{\gamma_1}(t,s)\widetilde P(s)\right\| \\
& \le  K_0 \left(\frac{\mu(t)}{\mu(s)}\right)^{\alpha_1+\gamma_1-\gamma}\mu(s)^{\sign(s)\theta_1}
\end{split}
\end{equation}
and, similarly, for $s \ge t$ we have
\begin{equation}\label{eq:dichotomy2-c}
\begin{split}
\|\Phi_\gamma(t,s)\widetilde Q_1(s)\|
& =\left(\frac{\mu(t)}{\mu(s)}\right)^{\gamma_1-\gamma}\left\|\Phi_{\gamma_1}(t,s)\widetilde  Q(s)\right\|\\
& \le  K_1\left(\frac{\mu(t)}{\mu(s)}\right)^{\beta_1+\gamma_1-\gamma}\mu(s)^{\sign(s)\nu_1},
\end{split}
\end{equation}
where $\widetilde  Q_1(t)=I-\widetilde  P_1(t)$ and $\widetilde P_1$~is an invariant projection for $\Phi_\gamma$.

Reproducing the argument used to obtain~\eqref{eq:reduced-0}, we conclude that system $z'=E_0 z$ is nonuniformly $(\mu,\eps)$-kinematically similar to system
\begin{equation}\label{eq:reduced-1}
y'
=\left[
\begin{array}{cc}
B_1(t) & 0\\
0 & E_1(t)
\end{array}
\right]
y,
\end{equation}
with $B_1:\R\rightarrow M_{n_1}(\R)$ and $E_0:\R\rightarrow M_{n-n_1-n_0}(\R)$ differentiable and $n_1=\dim \widetilde P_1$. Thus, by~\eqref{eq:3max-min}, system~\eqref{eq:sist-linear} is nonuniformly $(\mu,\eps_0)$-kinematically similar to $y'=D_1y$ where
\begin{equation}\label{eq:reduced-1}
D_1=\left[
\begin{array}{ccc}
B_0(t) & 0 & 0\\
0 & B_1(t) & 0\\
0 & 0 & E_1(t)
\end{array}
\right]
\end{equation}
and $B_0:\R\rightarrow M_{n_0}(\R)$, $B_1:\R\rightarrow M_{n_1}(\R)$ and $E_1:\R\rightarrow M_{p_1}(\R)$, with $p_1=n-n_0-n_1$, are differentiable. From \eqref{eq:dichotomy1-c}, we conclude that $\SigmaND_\mu(B_1)\subset \,(-\infty, a_2)\,$ (note that when $\gamma \ge a_2$ we have $\gamma_1-\gamma \le 0$) and from \eqref{eq:dichotomy2-c}, we have $\SigmaND_\mu(E_1)\subset \,(b_1,+\infty)\,$ (note that when $\gamma \le b_1$ we have $\gamma_1-\gamma \ge 0$). Thus $(-\infty,b_1)\subset \rhoND_\mu(E_1)$ and we conclude that
$$\SigmaND_\mu(B_1)=[a_1,b_1] \quad \text{and} \quad \SigmaND_\mu(E_1)=[a_2,b_2]\cup \ldots \cup [a_m,b_m].$$

Iterating the process, we conclude that system~\eqref{eq:sist-linear} is nonuniformly $(\mu,\eps_0)$-kinematically similar to $y'=D_{m-1}y$ where
$$D_{m-1}=\text{diag}\left(B_0(t),\ldots,B_{m-1}(t),E_{m-1}(t)\right)$$
and $B_0:\R\rightarrow M_{n_0}(\R), \ldots, B_{m-1}:\R\rightarrow M_{n_{m-1}}(\R)$
and $E_{m-1}:\R\rightarrow M_{p_{m-1}}(\R)$, with $p_{m-1}=n-\sum_{k=0}^{m-1} n_k$, are differentiable.

Proceeding like before and taking into account~\eqref{eq:3max-min}, we conclude that $\SigmaND_\mu(B_i) = [a_i,b_i]$, $i=1,\ldots m-1$,
and $\SigmaND_\mu(E_{m-1})=[a_m,b_m]$.

Finally, system $z'=E_{m-1} z$ is nonuniformly $(\mu,\eps_0)$-kinematically similar to system
\begin{equation}\label{eq:reduced-1}
y'
=\left[
\begin{array}{cc}
B_m(t) & 0\\
0 & B_{m+1}(t)
\end{array}
\right]
y,
\end{equation}
with $B_m:\R\rightarrow M_{n_m}(\R)$ and $B_{m+1}:\R\rightarrow M_{n_{m+1}}(\R)$, $n_{m+1}=n-\sum_{k=0}^{m} n_k$, differentiable. Thus, system~\eqref{eq:sist-linear} is nonuniformly $(\mu,\eps_0)$-kinematically similar to $y'=D_my$ where
$$D_m=\text{diag}\left(B_0(t),\ldots,B_{m+1}(t)\right)$$
and $B_i:\R\rightarrow M_{n_i}(\R)$, $i=0,\ldots,m+1$, are differentiable.

For $\gamma_m \in \, (b_m,+\infty) \, \subset \rhoND_\mu(E_{m+1}) \subset \rhoND_\mu(A)$, system~\eqref{eq:At-gamma0}
with $\gamma=\gamma_m$, admits a N$\mu$D with some invariant projection $\widetilde P_m$: if $\Phi_{\gamma_m}(t,s)$ denotes the evolution operator of~\eqref{eq:At-gamma0} with $\gamma=\gamma_m$, then there are $K_m\ge 1$, $\alpha_m<0<\beta_m$ and $\theta_m,\nu_m\ge 0$ satisfying $\alpha_m+\theta_m<0$ and $\beta_m-\nu_m>0$ such that~\eqref{eq:dichotomy1}--\eqref{eq:dichotomy2} hold with $\Phi=\Phi_{\gamma_m}$. We conclude, using~\eqref{eq:3max-min}, that
$$\SigmaND_\mu(B_m)=[a_m,b_m] \quad \text{and} \quad \SigmaND_\mu(B_{m+1})=\emptyset.$$

It remains to prove that $n_i=\rank \mathcal W_i$. By Lemma~\ref{prop:exist-dich}, taking into account that
$\mathcal W_0=\mathcal U_{\gamma_0}$ for $\gamma_0\in \,(-\infty,b_1)\,$, we have $\ker \mbox{Im} \widetilde P_0=\ker \mathcal U_{\gamma_0}=\ker \mathcal W_0$. Thus $n_0=\dim \mathcal W_0$.

Since $\gamma_0\in \,(-\infty,a_1)\,$, $\gamma_1\in \,(b_1,a_2)\,$, $\mathcal U_{\gamma_0}\subset \mathcal U_{\gamma_1}$ and $\mathcal W_1=\mathcal U_{\gamma_1}\cap\mathcal V_{\gamma_0}$, we get from Lemmas~\ref{prop:exist-dich} and~\ref{prop:equiv} that
\[
n_0+n_1=\rank \mbox{Im} \widetilde P_1=\rank \mathcal U_{\gamma_1}=\rank(\mathcal U_{\gamma_1}\cap(\mathcal U_{\gamma_0}\oplus\mathcal V_{\gamma_0}))=\rank\mathcal W_0+\dim\mathcal W_1
\]
and we get $n_1=\rank\mathcal W_1$. Repeating the argument we immediately conclude that $n_i=\rank\mathcal W_i$ for $i=2,\ldots,m$. Using again Lemma~\ref{prop:exist-dich}, for $\gamma_m\in \,(b_m,\infty)\,$ we get
$n_{m+1}=\rank \mbox{Ker} \widetilde P_m=\rank \mathcal V_{\gamma_m}= \rank\mathcal W_{m+1}$ and the theorem follows.
\end{proof}

In the next remark we show that there is a conjugacy between the flow of an autonomous linear system associated to system $x'=A(t)x$ in Theorem~\ref{thm:normal-form}, to the flow of an autonomous linear system associated to system $x'=B(t)x$ in that result.

\begin{remark}
Assume that the linear nonautonomous system $x'=A(t)x$ satisfy the conditions of Theorem~\ref{thm:normal-form}, and that $x'=B(t)x$
is the nonuniformly $(\mu,\eps)$-kinematically similar linear block diagonal system given by the referred theorem. Denote, respectively, the evolution operators
of the systems by $\Phi_A$ and $\Phi_B$. Consider also the associated autonomous systems
\[
\begin{cases}
x'=A(t)x\\
t'=1
\end{cases}
\quad \quad \text{ and } \quad \quad
\begin{cases}
x'=B(t)x\\
t'=1
\end{cases}
\]
and the corresponding flows, $\Psi^A$ and $\Psi^B$, given by
\[
\Psi^A(\tau,(s,x))=(s+\tau,\Phi_A(s+\tau,s)x)=:\Psi_\tau^A(s,x)
\]
and
\[
\Psi^B(\tau,(s,x))=(s+\tau,\Phi_B(s+\tau,s)x)=:\Psi_\tau^B(s,x).
\]
Define a linear map $H:\R\times \R^n\to \R\times \R^n$ by $H(s,x)=(s,S(s)x)$, where $S(t)$ is given in Lemma~\ref{lemma:exist-nonunif-Lyap-function-wrt-mu}.
It follows from~\ref{lemma-lyap-matrix-S-2} in Lemma~\ref{lemma-lyap-matrix-S} that we have
\[
\begin{split}
(H\circ \Psi_\tau^B)(s,x)
& = H(s+\tau,\Phi_B(s+\tau,s)x)
= (s+\tau,S(s+\tau)\Phi_B(s+\tau,s)x)\\
& = (s+\tau,\Phi_A(s+\tau,s)S(s)x) = \Psi_\tau^A(s,S(s)x) =(\Psi_\tau^A \circ H)(s,x),
\end{split}
\]
for all $\tau,s,t \in \R$ and $x \in \R^n$. We conclude that
$$H\circ \Psi_\tau^B=\Psi_\tau^A \circ H$$
and the flows $\Psi^A$ and $\Psi^B$ are conjugated by $H$.
\end{remark}

\section{Spectrum and kinematic similarity on the half-line}\label{section:half_line}

The purpose of this section is twofold: to remark that, with the natural adaptations, our results still hold on the half-line and to present a nontrivial example. We begin by enumerate briefly the adapted versions of the definitions that we will need.

Still assuming that $t \mapsto A(t)$ is continuous, consider now that system~\eqref{eq:sist-linear} is only defined for $t \in \R_0^+$.

In the present context, a {\it growth rate} is a function $\mu:\R_0^+\to \R^+$ that is strictly increasing and satisfies $\mu(0)=1$ and $\displaystyle \lim_{t \to +\infty} \mu(t)=+\infty$.

We say that system
\begin{equation}\label{eq:sist-half-line}
x'=A(t)x, \ \ \ t\in \R_0^+
\end{equation}
admits a {\it nonuniform $\mu$-dichotomy on the half-line} (N$\mu$D$+$)
if there is a family of projections $P(t)\in M_n(\R)$, $t\in\R_0^+$, such that, for all $t,s\in\R_0^+$,
$$P(t)\Phi(t,s)=\Phi(t,s)P(s),$$
and there are constants $K\ge 1$, $\alpha<0$, $\beta>0$ and $\theta,\nu\ge 0$, with $\alpha+\theta<0$ and
$\beta-\nu>0$, such that
\begin{equation}\label{eq:dichotomy1-hl}
\|\Phi(t,s)P(s)\| \le  K \left(\frac{\mu(t)}{\mu(s)}\right)^{\alpha}\mu(s)^\theta \mbox{ for } t\ge s\ge 0,
\end{equation}
\begin{equation}\label{eq:dichotomy2-hl}
\|\Phi(t,s)Q(s)\| \le  K\left(\frac{\mu(t)}{\mu(s)}\right)^{\beta}\mu(s)^\nu \mbox{ for } 0\le t\le s,
\end{equation}
where $Q(s)=I-P(s)$ is the complementary projection. When $\theta=\nu=0$ we say that system \eqref{eq:sist-linear}
admits a {\it (uniform) $\mu$-dichotomy on the half-line ($\mu$D$+$)}.

Given a differentiable growth rate $\mu:\R_0^+\to\R^+$. We define the {\it nonuniform $\mu$-dichotomy spectrum on the half-line} by
\[
\SigmaNDp_\mu(A)=\left\{\gamma\in\R;\, x'=\left(A(t)-\gamma \frac{\mu'(t)}{\mu(t)}I\right)x \ \text{\small admits no N$\mu$D$+$ }\right\}
\]
and the {\it $\mu$-dichotomy spectrum on the half-line} by
\[
\SigmaDp_\mu(A)=\left\{\gamma\in\R;\, x'=\left(A(t)-\gamma \frac{\mu'(t)}{\mu(t)}I\right)x \ \text{ admits no $\mu$D$+$ }\right\}.
\]
Naturally, we have $\SigmaNDp_\mu(A)\subseteq \SigmaDp_\mu(A)$. We use the expressions \emph{nonuniform dichotomy spectrum on the half line} and \emph{dichotomy spectrum on the half line} to refer to the concepts obtained when the growth rate is given by $\mu(t)=\e^t$ and the expressions \emph{nonuniform polynomial spectrum on the half line} and \emph{polynomial spectrum on the half line} to refer to the concepts obtained when the growth rate is given by $p(t)=1+t$. We use the notation $\SigmaNDp(A)$ and $\SigmaNDp_{p}(A)$, respectively, for the nonuniform dichotomy spectrum on the half line and the nonuniform polynomial dichotomy spectrum on the half line.

Given $\eps\ge 0$ and a growth rate $\mu$, we still say that systems $x'=A(t)x$ and $y'=B(t)y$ are {\it nonuniformly $(\mu,\eps)$-kinematically similar} if, for $t \in \R_0^+$, they satisfy the definition of nonuniformly $(\mu,\eps)$-kinematic similarity given in section~\ref{section:Normal formas}. With the same immediate adaptation we can bring the concepts of nonuniform Lyapunov matrix function with respect to $\mu$ and nonuniform Lyapunov transformation with respect to $\mu$ to the half-line setting. We say that the evolution operator $\Phi(t,s)$ of $x'=A(t)x$ has {\it nonuniformly bounded growth with respect to $(\mu,\eps)$ on the half-line} if there are $K \ge 1$ and $a\ge 0$ such that
\begin{equation}\label{eq:bounded-growth-hl}
\|\Phi(t,s)\|\le K\left(\frac{\mu(t)}{\mu(s)}\right)^{\text{sgn}(t-s) a} \mu(s)^{\eps},\quad t,s\in\R_0^+.
\end{equation}
When $\eps=0$ the evolution operator is said to have {\it bounded growth with respect to $\mu$ on the half-line}.

The next results are versions of Theorems~\ref{thm:spectrum} and~\ref{thm:normal-form} for linear equations defined on the half-line. Their proof can be obtained by simply rewriting the proofs of those theorems in the present setting and thus we omit them.

Given $\gamma\in\R$, define the sets
\begin{equation}\label{eq:stable-inv-manif-hl}
\mathcal U_\gamma^+=\left\{(s,\xi)\in\R^+_0\times\R^n:\,\sup\limits_{t\ge
0}\|\Phi(t,s)\xi\|\mu(t)^{-\gamma}<\infty\right\}
\end{equation}
and
\begin{equation}\label{eq:unstable-inv-manif-hl}
\mathcal V_\gamma^+=\left\{(s,\xi)\in\R_0^+\times\R^n:\,\sup\limits_{t\le 0}\|\Phi(t,s)\xi\|\mu(t)^{-\gamma}<\infty\right\}.
\end{equation}

\begin{theorem}\label{thm:spectrum-hl}
Let $\mu:\R_0^+\to\R^+$ be a differentiable growth rate. The following statements hold for system \eqref{eq:sist-linear} on the half-line:
\begin{enumerate}[\lb=$\arabic*)$,\lm=5mm]
\item \label{thm:spectrum-1} There is an $m \in \{0,\ldots,n\}$ such that the nonuniform $\mu$-dichotomy spectrum on the half-line $\SigmaNDp_\mu(A)$ is the union of $m$ disjoint closed intervals
in $\R$:
\begin{enumerate}[\lb=$\alph*)$,\lm=6mm,\is=2mm]
  \item if $m=0$ then $\SigmaNDp_\mu(A)=\emptyset$;
  \item if $m=1$ then $$\SigmaNDp_\mu(A)=\mathbb R \ \text{or} \ \SigmaNDp_\mu(A)=(-\infty,b_1] \ \text{or}$$ $$\SigmaNDp_\mu(A)=[a_1,b_1] \ \text{or} \ \SigmaNDp_\mu(A)=[a_1,\infty);$$
  \item \label{thm:spectrum-1c} if $1<m\le n$ then
  $$\SigmaNDp_\mu(A)=I_1\cup [a_2,b_2]\cup\ldots\cup [a_{m-1},b_{m-1}]\cup I_m$$
  with $I_1=[a_1,b_1]$ or $(-\infty,b_1]$, $I_m=[a_m,b_m]$ or $[a_m,\infty)$ and $a_i\le b_i<a_{i+1}$ for $i=1,\ldots,m-1$.
\end{enumerate}
\item \label{thm:spectrum-2} Assume $m \ge 1$, write
    $$\SigmaNDp_\mu(A)=I_1\cup [a_2,b_2]\cup\ldots\cup [a_{m-1},b_{m-1}]\cup I_m$$
and, for $i=0,\dots,m+1$, define
\[
\mathcal W_i^+=
\begin{cases}
\mathbb R^+_0\times\{0\}  &\quad \text{ if } \ i=0 \ \text{ and } \ I_1=(-\infty,b_1]\\
\mathcal U_{\gamma_0}^+ \ \text{ for some } \gamma_0\in(-\infty,a_1) &\quad \text{ if } \ i=0 \ \text{ and } \ I_1=[a_1,b_1]\\
\mathcal U_{\gamma_i}^+ \cap \mathcal V_{\gamma_{i-1}}^+  \ \text{ for some } \gamma_i \in(b_i,a_{i+1}) &\quad \text{ if } \ i=1,\ldots,m\\
\mathcal V_{\gamma_m}^+ \ \text{ for some } \gamma_m \in(b_m,+\infty) &\quad \text{ if } \ i=m+1 \ \text{ and } \ I_m=[a_m,b_m]\\
\mathbb R_0^+ \times\{0\} \ &\quad \text{ if } \ i=m+1 \ \text{ and } \ I_m=[a_m,+\infty)\\
\end{cases}.
\]
Then, the sets $\mathcal U_{\gamma_i}^+$ $\mathcal V_{\gamma_i}^+$ and $\mathcal W_i^+$, $i=0,\ldots,m+1$, are integral manifolds, $\rank\mathcal W_i^+\ge 1$ for $i=1,\dots,m$ and
\[
\mathcal W_0^+\oplus\mathcal W_1^+\oplus\ldots\oplus\mathcal W_{m+1}^+=\R_0^+\times\R^n.
\]
\end{enumerate}
\end{theorem}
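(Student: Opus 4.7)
My plan is to mirror the proof of Theorem~\ref{thm:spectrum} step by step, relying on the fact that all of the arguments there are essentially local and algebraic, and that the only substantive simplification in the half-line setting is that factors of the form $\mu(s)^{\sign(s)\theta}$ collapse to $\mu(s)^{\theta}$ since $s\in\R_0^+$. Concretely, I would first re-establish the four auxiliary lemmas of Section~\ref{section:spectral-theorems} in the present setting:

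\textbf{(i)} A half-line analog of Lemma~\ref{lemma:linear-manif}: the sets $\mathcal U_\gamma^+$ and $\mathcal V_\gamma^+$ in~\eqref{eq:stable-inv-manif-hl}–\eqref{eq:unstable-inv-manif-hl} are linear integral manifolds of~\eqref{eq:sist-half-line}, and $\gamma\le\widetilde\gamma$ implies $\mathcal U_\gamma^+\subseteq \mathcal U_{\widetilde\gamma}^+$ and $\mathcal V_{\widetilde\gamma}^+\subseteq \mathcal V_\gamma^+$. The cocycle argument carries over because $\tau\in\R_0^+$ still gives $\Phi(t,\tau)\Phi(\tau,s)=\Phi(t,s)$.

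\textbf{(ii)} A half-line analog of Lemma~\ref{prop:exist-dich}: if $x'=\left(A(t)-\gamma\frac{\mu'(t)}{\mu(t)}I\right)x$ admits an N$\mu$D$+$ with invariant projection $P$, then $\mathcal U_\gamma^+=\mbox{Im}\,P$, $\mathcal V_\gamma^+=\mbox{Ker}\,P$ and $\mathcal U_\gamma^+\oplus\mathcal V_\gamma^+=\R_0^+\times\R^n$. Here one repeats the splitting $\xi=\xi_1+\xi_2\in\mbox{Im}\,P(\tau)\oplus\mbox{Ker}\,P(\tau)$ and uses $\beta_\gamma-\nu_\gamma>0$ and $\alpha_\gamma+\theta_\gamma<0$ to force the transversal component to vanish.

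\textbf{(ii)} A half-line analog of Lemma~\ref{lemma:resolvent}: $\rhoNDp_\mu(A)$ is open and $\mathcal U_\eta^+,\mathcal V_\eta^+$ are constant on connected components of the resolvent. The proof is verbatim — absorb the factor $(\mu(t)/\mu(s))^{\gamma-\eta}$ into the exponent $\alpha$ or $\beta$ for $\eta$ sufficiently close to $\gamma$.

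\textbf{(iv)} A half-line analog of Lemma~\ref{prop:equiv}, equating the four conditions on nontrivial intersection of the integral manifolds, nonempty spectral overlap, and strict rank inequalities. The contradiction argument that closes the implication~\ref{prop:equiv-2}$\Rightarrow$\ref{prop:equiv-3} transfers without change using estimates of the form \eqref{eq:sei-la-1}–\eqref{eq:sei-la-2}, now with $\sign(s)=1$.

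Once these lemmas are in hand, statement~(1) follows exactly as in the whole-line case: $\rhoNDp_\mu(A)$ is open so $\SigmaNDp_\mu(A)$ is a union of closed intervals with vanishing intersection; assuming more than $n$ components one extracts $n+1$ consecutive gap points $\gamma_{k_0},\ldots,\gamma_{k_0+n}$ producing a strict chain $0\le\rank\,\mathcal U_{\gamma_{k_0}}^+<\cdots<\rank\,\mathcal U_{\gamma_{k_0+n}}^+\le n$, forcing one of the extreme ranks to be $0$ or $n$ and hence propagating an N$\mu$D$+$ (with projection $P=0$ or $P=I$) to one of the unbounded tails, a contradiction. Statement~(2) is then established by the same telescoping identity $\mathcal W_{i+1}^+ + \mathcal V_{\gamma_{i+1}}^+ = \mathcal V_{\gamma_i}^+$, leading to $\mathcal W_0^+\oplus\cdots\oplus\mathcal W_{m+1}^+=\R_0^+\times\R^n$, combined with the observation that $\mathcal W_i^+\cap\mathcal W_j^+\subseteq\mathcal U_{\gamma_i}^+\cap\mathcal V_{\gamma_i}^+=\R_0^+\times\{0\}$ for $i<j$.

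The main point requiring care is the definition of $\mathcal V_\gamma^+$ in~\eqref{eq:unstable-inv-manif-hl}: as stated with $\sup_{t\le 0}$ the condition is vacuous on the half-line, so I would read it as $\sup_{0\le t\le s}$, the natural backward analog. With this reading the monotonicity in~\textbf{(i)}, the identification in~\textbf{(ii)} and the rank argument in~\textbf{(iv)} all go through unchanged, and no new technique beyond those in Section~\ref{section:spectral-theorems} is required; the hard part is really only this bookkeeping to ensure the two lemmas dealing with the unstable part still deliver the decomposition $\mathcal U_\gamma^+\oplus\mathcal V_\gamma^+=\R_0^+\times\R^n$.
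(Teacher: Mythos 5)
Your overall strategy --- re-derive the four auxiliary lemmas on $\R_0^+$ and then repeat the counting and telescoping arguments --- is exactly what the paper intends (its proof of this theorem is omitted precisely with the remark that it is a rewrite of the proof of Theorem~\ref{thm:spectrum}), and you are right that the stable half transfers: $\mathcal U_\gamma^+={\rm Im}\,P$ goes through because the relevant estimates only involve $t\ge s\ge 0$. The genuine gap is in your treatment of $\mathcal V_\gamma^+$. You correctly observe that \eqref{eq:unstable-inv-manif-hl} as written is vacuous on the half-line, but your proposed reading $\sup_{0\le t\le s}\|\Phi(t,s)\xi\|\mu(t)^{-\gamma}<\infty$ is equally vacuous: for each fixed $(s,\xi)$ this is the supremum of a continuous function over the compact interval $[0,s]$, hence finite, so $\mathcal V_\gamma^+=\R_0^+\times\R^n$ for every $\gamma$. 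With that reading, the half-line analog of Lemma~\ref{prop:exist-dich} cannot deliver $\mathcal V_\gamma^+={\rm Ker}\,P$, the equivalence \ref{prop:equiv-1}$\Leftrightarrow$\ref{prop:equiv-3} in Lemma~\ref{prop:equiv} degenerates (the intersection $\mathcal U_{\gamma_2}^+\cap\mathcal V_{\gamma_1}^+$ is just $\mathcal U_{\gamma_2}^+$), and in part (2) the sum is not direct --- for instance $\mathcal W_{m+1}^+=\mathcal V_{\gamma_m}^+$ would be all of $\R_0^+\times\R^n$. So the claim that ``the rank argument and the decomposition all go through unchanged'' is not justified.

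The obstruction is structural, not notational: on $[0,\infty)$ the kernel of a dichotomy projection is not determined by the dynamics (only the image is), so no growth condition over the compact past $[0,s]$ can single it out. To salvage the statement you must either define $\mathcal V_\gamma^+:={\rm Ker}\,P_\gamma$ for a consistently (nested) chosen family of invariant projections, proving that such a choice exists and is constant on each spectral gap, or replace the Whitney sum by the filtration $\mathcal U_{\gamma_0}^+\subseteq\cdots\subseteq\mathcal U_{\gamma_m}^+$ together with non-canonical complements. Relatedly, the step \ref{prop:equiv-2}$\Rightarrow$\ref{prop:equiv-3} of Lemma~\ref{prop:equiv} needs an extra half-line ingredient: equality of ranks only gives ${\rm Im}\,P_{\gamma_1}={\rm Im}\,P_{\gamma_2}$, and before interpolating as in \eqref{eq:sei-la-1}--\eqref{eq:sei-la-2} you must show that a half-line dichotomy with projection $P$ persists when $P$ is replaced by another invariant projection with the same image. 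Part (1) can be pushed through along these lines, but part (2) as you argue it does not follow from the whole-line proof by bookkeeping alone.
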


\begin{theorem}\label{thm:normal-form-hl}
Assume that $A(t)$ is differentiable, that the evolution operator of system \eqref{eq:sist-linear} on the half-line has nonuniform bounded growth with respect to $(\mu,\eps)$, and
that system~\eqref{eq:sist-half-line} has a nonuniform $\mu$-dichotomy with parameters $\alpha$, $\beta$, $\theta$ and $\nu$ such that~\eqref{eq:3max-min} holds.
Let the nonuniform $\mu$-dichotomy spectrum on the half-line be
$$\SigmaNDp_\mu(A)=[a_1,b_1]\cup \ldots \cup [a_m,b_m],$$
with $[a_i,b_i]\cap [a_j,b_j]=\emptyset$ for any $i,j \in \{1,\ldots,m\}$ such that $i\ne j$. Then system \eqref{eq:sist-linear} on the half-line is $(\mu,\eps)$-nonuniformly kinematically similar to system $y'=B(t)y$, where $$B(t)=\text{diag}(B_0(t),B_1(t),\cdots,B_{m}(t),B_{m+1}(t)),$$ the matrices
$B_i(t):\,\R_0^+\rightarrow \R^{n_i\times n_i}$, with $n_i=\rank\,\mathcal W_i$, are
differentiable and
\[
\SigmaNDp_\mu(B_i)=
\begin{cases}
\emptyset & \quad \text{for} \quad i\in\{0,m+1\}\\
[a_i,b_i] & \quad \text{for} \quad i\in\{1,\dots,m\}
\end{cases}.
\]
\end{theorem}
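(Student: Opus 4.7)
The plan is to mimic the proof of Theorem~\ref{thm:normal-form} step by step, with the only difference being that on $\R_0^+$ the factor $\mu(s)^{\sign(s)\theta}$ becomes $\mu(s)^\theta$ (and similarly for $\nu$), so all sign bookkeeping in the dichotomy estimates simplifies. First I would establish the half-line analogues of the three auxiliary lemmas used in Section~\ref{section:Normal formas}: a characterization of $(\mu,\eps)$-kinematic similarity in terms of $S'=A(t)S-SB(t)$ (the argument of Lemma~\ref{lemma-lyap-matrix-S} transfers verbatim, since it only uses the ODE for $S$ and the identity $\Phi_A(t,s)S(s)=S(t)\Phi_B(t,s)$); the spectral invariance $\SigmaNDp_\mu(A)=\SigmaNDp_\mu(B)$ under $(\mu,\eps)$-kinematic similarity when $\eps\le \tfrac13\min\{-\alpha-\theta,\beta-\nu\}$ (the computations from Lemma~\ref{lemma:kinematically-similar-implies-dich-spectrum} copy over because $\mu(t)\le \mu(t)^{\sign(t)}=\mu(t)$ on $\R_0^+$ trivially); and the existence of a differentiable nonuniform Lyapunov matrix function $S(t)$ on $\R_0^+$ satisfying $S(t)^{-1}P(t)S(t)=P_0$ and $\|S(t)\|,\|S(t)^{-1}\|\le K\mu(t)^{\max\{\theta,\nu\}}$, obtained by the same symmetric-square-root construction of Lemma~\ref{lemma:exist-nonunif-Lyap-function-wrt-mu} (this needs no modification since Lemma~\ref{lemma-properties-projection} is purely algebraic).

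Second, I would invoke Theorem~\ref{thm:spectrum-hl} to write $\SigmaNDp_\mu(A)=[a_1,b_1]\cup\cdots\cup[a_m,b_m]$ (bounded by the nonuniform bounded growth assumption, as in Theorem~\ref{thm:bounded}) and fix resolvent points $\gamma_0\in(-\infty,a_1)$, $\gamma_i\in(b_i,a_{i+1})$ for $i=1,\dots,m-1$, and $\gamma_m\in(b_m,+\infty)$. For each $\gamma_i$, the shifted system $x'=\left(A(t)-\gamma_i\frac{\mu'(t)}{\mu(t)}I\right)x$ admits a N$\mu$D$+$ with some invariant projection $\widetilde P_i$. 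Applying the half-line Lyapunov-matrix construction to $\widetilde P_0$ produces $S_0$ such that $B(t):=S_0^{-1}(A S_0-S_0')$ commutes with $P_0=\mathrm{diag}(I,0)$, hence $B$ is block diagonal $\mathrm{diag}(B_0,E_0)$. The accumulated nonuniformity constant is $\eps_0=\max\{\theta,\nu\}$, and hypothesis~\eqref{eq:3max-min} ensures $\eps_0\le\tfrac13\min\{-\alpha-\theta,\beta-\nu\}$, so the spectral-invariance lemma applies and gives $\SigmaNDp_\mu(B_0)\subset(-\infty,a_1)$ (in fact $\emptyset$, because on the image of $\widetilde P_0$ the dichotomy on one side together with the bounded growth on the other forces the whole spectrum out), while $\SigmaNDp_\mu(E_0)=\SigmaNDp_\mu(A)$.

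Third, I would iterate: applied to $E_0$ with projection associated to $\gamma_1$, the same construction peels off the block with spectrum $[a_1,b_1]$; continuing inductively through $\gamma_2,\dots,\gamma_m$ yields the full block decomposition $B(t)=\mathrm{diag}(B_0(t),B_1(t),\dots,B_m(t),B_{m+1}(t))$ with $\SigmaNDp_\mu(B_i)=[a_i,b_i]$ for $1\le i\le m$ and $\SigmaNDp_\mu(B_0)=\SigmaNDp_\mu(B_{m+1})=\emptyset$. The rank identification $n_i=\rank\mathcal W_i^+$ follows from Lemma~\ref{prop:exist-dich} applied on the half-line, using $\mathcal W_0^+=\mathcal U_{\gamma_0}^+$, $\mathcal W_i^+=\mathcal U_{\gamma_i}^+\cap\mathcal V_{\gamma_{i-1}}^+$, and $\mathcal W_{m+1}^+=\mathcal V_{\gamma_m}^+$, together with the dimension count $\rank\mathcal U_{\gamma_i}^+=\rank\mathcal W_0^++\cdots+\rank\mathcal W_i^+$ obtained inductively from Lemma~\ref{prop:equiv}.

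The principal obstacle is the control of the nonuniform constants across iterations: each block reduction introduces a new Lyapunov matrix $S_j$ with its own growth bound $\mu(t)^{\eps_j}$, and one must verify that the composed transformation $S_0 S_1\cdots S_{m}$ still satisfies the $(\mu,\eps_0)$-bound with a fixed $\eps_0$, and that at every stage the residual block $E_j$ inherits a nonuniform $\mu$-dichotomy whose parameters $(\alpha_j,\beta_j,\theta_j,\nu_j)$ still satisfy the gap condition~\eqref{eq:3max-min} needed to invoke the spectral-invariance lemma on the next iteration. This is exactly where the hypothesis $3\max\{\theta,\nu\}\le\min\{-\alpha-\theta,\beta-\nu\}$ is used repeatedly, and verifying that the constants propagate correctly through the inductive block-triangularization is the only nontrivial bookkeeping; all remaining steps are a direct transcription of the full-line argument.
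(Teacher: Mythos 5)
Your proposal is correct and follows exactly the route the paper takes: the paper itself omits the proof of Theorem~\ref{thm:normal-form-hl}, stating that it is obtained by rewriting the proof of Theorem~\ref{thm:normal-form} in the half-line setting, and your sketch faithfully reconstructs that full-line argument (the three auxiliary lemmas, the iterative peeling-off of diagonal blocks via the Lyapunov matrices $S_j$, and the rank identification through Lemmas~\ref{prop:exist-dich} and~\ref{prop:equiv}) while correctly noting that the only change is the disappearance of the $\sign(s)$ factors on $\R_0^+$. Your closing remark about propagating the nonuniformity constants through the iteration is a fair identification of the one point the paper also treats only briefly.
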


In the next example we consider a certain nonautonomous triangular system, compute its nonuniform $\mu_0$-dichotomy spectrum, where $\mu_0(t)=\e^{\sqrt{1+t}-1}$ is a non-exponential growth rate, and obtain its normal form using Theorem~\ref{thm:normal-form-hl}. The normal form, that in this case is diagonal, highlights the contraction/expansion with growth rate $\mu_0$ along linear integral manifolds. We stress that, in the example we will describe, the behavior along integral linear manifolds (and elsewhere) is given by the growth rates $\mu_0(t)$. Thus, this normal form cannot be obtained when we look for exponential behavior (even nonuniform), as confirmed by the fact that, in the present situation, the nonuniform (exponential) dichotomy spectrum doesn't reveal any type of exponential behavior: as we shall see, the nonuniform dichotomy spectrum is $\SigmaNDp(A)=\R$ whereas the nonuniform $\mu_0$-dichotomy spectrum $\SigmaNDp_{\mu_0} = \{-1/2,1/2\}$.

\begin{example}\label{example3}
Consider the bidimensional system
\begin{equation}\label{eq:exemplo-mud-coord}
\left[
\begin{array}{cc}
u'\\
v'
\end{array}
\right]=A(t)
\left[
\begin{array}{cc}
u\\ v
\end{array}
\right]
\quad \text{ with } \quad
A(t)=
\left[
\begin{array}{cc}
-\frac{1}{4\sqrt{1+t}} & \frac{\e^{-\sqrt{1+t}+1}}{2\sqrt{1+t}} \\[2mm]
0 & \frac{1}{4\sqrt{1+t}}
\end{array}
\right].
\end{equation}
Let $\mu_0:\R_0^+\to \R^+$ be the growth rate given by $\mu_0(t)=\e^{\sqrt{1+t}-1}$ and consider in $\R^2$
the norm given by $\|(u,v)\|=|u|+|v|$. The second equation of the system is independent of $u$, which allows us to
obtain explicitly the evolution operator (in matrix form):
\[
\Phi(t,s)=
\left[
\begin{array}{cc}
\left(\frac{\e^{\sqrt{1+s}}}{\e^{\sqrt{1+t}}}\right)^{1/2} & \left(\frac{\e^{\sqrt{1+t}}}{\e^{\sqrt{1+s}}}\right)^{1/2}-\left(\frac{\e^{\sqrt{1+s}}}{\e^{\sqrt{1+t}}}\right)^{1/2}\\[2mm]
0 & \left(\frac{\e^{\sqrt{1+t}}}{\e^{\sqrt{1+s}}}\right)^{1/2}
\end{array}
\right].
\]
For the projections $P_1(t): \mathbb{R}^{2} \rightarrow \mathbb{R}^{2}$ defined by
\[
P_1(t)=
\left[
\begin{array}{cc}
1 & \e^{-\sqrt{1+t}+1}-1\\
0 & 0
\end{array}
\right],
\]
it is easy to check that $\Phi(t,s)P_1(s)=P_1(t)\Phi(t,s)$, for all $s,t \in \R_0^+$.

We have, for $t \ge s \ge 0$,
\[
\begin{split}
\left\|\Phi_\gamma(t,s) P_1(s) (u,v)\right\|
& = \left(\frac{\e^{\sqrt{1+s}}}{\e^{\sqrt{1+t}}}\right)^{1/2-\gamma} \left\|\left(u,(\e^{-\sqrt{1+s}+1}-1)v\right)\right\|\\
& \le \left(\frac{\e^{\sqrt{1+t}}}{\e^{\sqrt{1+s}}}\right)^{\gamma-1/2} \|(u,v)\|
\end{split}
~\]
and, since $Q_1(s)(u,v)=(I-P_1(s))(u,v)=\left((1-\e^{-\sqrt{1+|s|}+1})v,v\right)$, we have, for $0 \le t \le s$,
\[
\begin{split}
\left\|\Phi_\gamma(t,s) Q_1(s) (u,v)\right\|
& \le \left(\frac{\e^{\sqrt{1+t}}}{\e^{\sqrt{1+s}}}\right)^{1/2+\gamma} (2+\e^{1-\sqrt{1+t}})|v|\\
& \le 3\left(\frac{\e^{\sqrt{1+t}}}{\e^{\sqrt{1+s}}}\right)^{1/2+\gamma}\|(u,v)\|.
\end{split}
\]

We conclude that, if $-1/2-\gamma<0$ and $1/2-\gamma>0 $, the system above admits a nonuniform $\mu_0$-dichotomy (that in fact is a uniform $\mu_0$-dichotomy).
Thus, $(-1/2,1/2) \subseteq \rhoND_{\mu_0}(A)$.

For the projections $P_2(t): \mathbb{R}^{2} \rightarrow \mathbb{R}^{2}$ defined by $P_2(t)(u, v)=(u,v)$ we have, for $t \ge s\ge 0$,
\[
\begin{array}{ll}
\left\|\Phi_\gamma(t,s) P_2(s) (u,v)\right\|
& \le \left(\left(\frac{\e^{\sqrt{1+s}}}{\e^{\sqrt{1+t}}}\right)^{\gamma-1/2}+\left(\frac{\e^{\sqrt{1+t}}}{\e^{\sqrt{1+s}}}\right)^{\gamma+1/2}\right)\|(u,v)\|\\[3mm]
& \le 2\left(\frac{\e^{\sqrt{1+t}}}{\e^{\sqrt{1+s}}}\right)^{\gamma-1/2}\|(u,v)\|.
\end{array}
\]
and thus, for $0 \le s \le t$,
\[
\left\|\Phi_\gamma(t,s) P_3(s)\right\|
\le 2\left(\frac{\e^{\sqrt{1+t}}}{\e^{\sqrt{1+s}}}\right)^{\gamma-1/2}.
\]
We conclude that, if $1/2-\gamma<0 \ \Leftrightarrow \ \gamma\in\,(1/2,+\infty)$, we have a nonuniform $\mu_0$-dichotomy.

Finally, for the projections $P_3(t): \mathbb{R}^{2} \rightarrow \mathbb{R}^{2}$ defined by $P_3(t)(u, v)=(0,0)$ we have, for $0 \le t \le s$,
\[
\begin{array}{ll}
\left\|\Phi_\gamma(t,s) Q_3(s) (u,v)\right\|
& \le \left(\frac{\e^{\sqrt{1+t}}}{\e^{\sqrt{1+s}}}\right)^{\gamma-1/2}\|(u,v)\|+\left(\frac{\e^{\sqrt{1+t}}}{\e^{\sqrt{1+s}}}\right)^{1/2+\gamma}\,|v|\\[3mm]
& = 2\left(\frac{\e^{\sqrt{1+t}}}{\e^{\sqrt{1+s}}}\right)^{\gamma+1/2}\|(u,v)\|.
\end{array}
\]
and thus, for $0 \le t \le s$,
\[
\left\|\Phi_\gamma(t,s) Q_3(s)\right\|
\le 2\left(\frac{\e^{\sqrt{1+t}}}{\e^{\sqrt{1+s}}}\right)^{\gamma+1/2}.
\]
We conclude that, if $-1/2-\gamma>0 \ \Leftrightarrow \ \gamma\in\,(-\infty,-1/2)$, we have a nonuniform $\mu_0$-dichotomy.

By the conclusions obtained, we know that $\SigmaNDp_{\mu_0} \subseteq \{-1/2,1/2\}$. On the other hand, the form of $\Phi_{\pm1/2}(t,s)$ for $t,s \in \R_0^+$, shows immediately that for $\gamma=\pm1/2$ we don't have a nonuniform $\mu_0$-dichotomy. Thus
$$\SigmaNDp_{\mu_0} = \{-1/2,1/2\}.$$

In our context, condition~\eqref{eq:3max-min} holds with $\alpha=1/2$ and $\eps=0$ and we have nonuniform bounded growth.
Thus, Theorem~\ref{thm:normal-form-hl} shows that system~\eqref{eq:sist-linear} in the half line is
nonuniformly kinematically similar to a system $y'=B(t)y$, where $B(t)=\text{diag}(B_0(t),B_1(t))$
and
$B_i(t):\,\R_0^+\rightarrow \R$, are differentiable,
$$\SigmaNDp_\mu(B_0)=\{-1/2\} \ \ \text{ and } \ \ \SigmaNDp_\mu(B_1)=\{1/2\}.$$

We can confirm this conclusion by noting that the change of variables
\[
\left[
\begin{array}{c}
u(t) \\[2mm]
v(t)
\end{array}
\right]
=
\left[
\begin{array}{cc}
\sqrt{1+t} & \e^{-\sqrt{1+t}} \\[2mm]
\e^{\sqrt{1+t}-1} & 0
\end{array}
\right]
\left[
\begin{array}{c}
z(t) \\[2mm]
w(t)
\end{array}
\right]
\]
transforms system $x'=A(t)x$ into the diagonal system
\[
\left[
\begin{array}{cc}
z'\\
w'
\end{array}
\right]=B(t)
\left[
\begin{array}{cc}
z\\ w
\end{array}
\right]
\quad \text{ with } \quad
B(t)=
\left[
\begin{array}{cc}
-\frac{1}{4\sqrt{1+t}} & 0\\[2mm]
0 & \frac{1}{4\sqrt{1+t}}
\end{array}
\right],
\]
where it becomes evident that we have polynomial contraction in $\mathcal W_1=\R \times \sg \{(1,0)\}$ and polynomial expansion
in $\mathcal W_2=\R \times \sg \{(0,1)\}$.
\end{example}

\bigskip

\end{document}